\documentclass[11pt]{amsart}

\usepackage{amsmath, amssymb, amsthm, amscd}
\usepackage{url}
\usepackage{graphicx}
\usepackage{color}
\usepackage{import}
\usepackage[margin=3cm]{geometry}
\usepackage{caption}
\usepackage{bbold}
\usepackage{enumerate}

\numberwithin{equation}{section}
\theoremstyle{plain}
\newtheorem{theorem}[equation]{Theorem}

\newtheorem{lemma}[equation]{Lemma}

\newtheorem{proposition}[equation]{Proposition}

\newtheorem*{namedtheorem}{\theoremname}
\newcommand{\theoremname}{testing}

\theoremstyle{definition}
\newtheorem{definition}[equation]{Definition}


\newcommand{\RR}{{\mathbb{R}}}
\newcommand{\ZZ}{{\mathbb{Z}}}

\newcommand{\CC}{{\mathbb{C}}}

\newcommand{\Hom}{\operatorname{Hom}}

\newcommand{\gLie}{\mathfrak{g}}
\newcommand{\Arith}{\operatorname{Ar}}

\newcommand{\Graph}{\mathcal{G}}

\newcommand{\Diag}{D}
\newcommand{\Tree}{\mathcal{T}}
\newcommand{\Path}{\mathcal{P}}

\renewcommand{\setminus}{{\smallsetminus}}

\newcommand{\tw}{\operatorname{tw}}

\newcommand{\cng}{\operatorname{cng}}
\newcommand{\cut}{\operatorname{w}}
\newcommand{\unit}{\mathbb{1}}

\newcommand{\Ring}{\mathcal{R}}
\newcommand{\id}{\operatorname{id}}

\newcommand{\Cat}{\mathcal{C}}
\newcommand{\cw}{\operatorname{cw}}
\newcommand{\poly}{\mathrm{poly}}


\newcommand{\parag}[1]{
	\medskip 
	{\bf #1}}

\setcounter{topnumber}{1}

\usepackage{lineno}

\title{Parameterized Complexity of Quantum Invariants}
\author{Cl\'ement Maria}

\address{INRIA Sophia Antipolis-M\'editerran\'ee, FRANCE}
\email{clement.maria@inria.fr}

\begin{document}

\begin{abstract}
We give a general fixed parameter tractable algorithm to compute quantum invariants of links presented by diagrams, whose complexity is singly exponential in the carving-width (or the tree-width) of the diagram. 

In particular, we get a $O(N^{\frac{3}{2} \cw} \poly(n))$ time algorithm to compute any Reshetikhin-Turaev invariant---derived from a simple Lie algebra $\gLie$---of a link presented by a planar diagram with $n$ crossings and carving-width $\cw$, and whose components are coloured with $\gLie$-modules of dimension at most $N$. For example, this includes the $N^{\mathrm{th}}$-coloured Jones polynomial and the $N^{\mathrm{th}}$-coloured HOMFLYPT polynomial.
\end{abstract}

\maketitle

\section{Introduction}

In geometric topology, testing the topological equivalence of knots (up to isotopy) is a fundamental yet remarkably difficult algorithmic problem. 

A main approach is to compare knots by properties depending on their topological types only, called \emph{invariants}. Starting with the introduction by Jones~\cite{jones_poly85} in the 1985 of a new polynomial invariant of knots, we have witnessed the birth of a new domain of low dimensional topology called {\em quantum topology}. From the study of quantum groups~\cite{Drinfeld1986,Jimbo1985} in algebra, topologists have designed new families of topological invariants for knots, links, and $3$-manifolds, such as the Reshetikhin-Turaev invariants~\cite{Reshetikhin90}. 
In practice, these {\em quantum invariants} have shown outstanding discriminative properties for non-equivalent knots and links, e.g., in the composition of knot censuses, and are at the heart of deep mathematical conjectures in the field~\cite{Garoufalidis04,Garoufalidis11,Kashaev97,Murakami01}. 

Consequently, efficient algorithms to compute quantum invariants are of strong interest. However, even the simplest quantum invariants, such as the Jones polynomial~\cite{Jaeger90}, are \#P-hard to compute. 
A successful approach towards practical implementations has been the introduction of \emph{parameterized complexity} to low dimensional topology. Independently, computing the Jones polynomial~\cite{Makowsky03} and the HOMFLYPT polynomial~\cite{Burton18} have been shown to admit fixed parameter tractable algorithms in the tree-width of the input link diagrams. Note that similar techniques apply to 3-manifold quantum invariants, such as the Barrett-Westbury-Turaev-Viro invariants~\cite{BurtonMS18} of triangulated $3$-manifolds. These algorithms led to significant speed-ups in practice.

\parag{Contribution.} In this article, we give an algorithm to compute quantum invariants derived from ribbon categories~\cite{Reshetikhin90,turaev10-book}, taking into account the carving-width of the input link diagram.

\begin{theorem}\label{thm:maing}
Fix a strict ribbon category $\Cat$ of $\ZZ[q]$-modules, and free modules $V_1, \ldots, V_m \in \Cat$ of dimension bounded by $N$. The problem:

\bigskip

\begin{tabular}{|l}
{\sc Quantum invariant at $\Cat, V_1, \ldots, V_m$:}\\
{\bf Input}: $m$-components link $L$, presented by a diagram $\Diag(L)$,\\
{\bf Output}: quantum invariant $J_L^{\Cat}(V_1, \ldots, V_m)$ \\
\end{tabular}

\bigskip

\noindent
can be solved in $O(\poly(n) N^{\frac{3}{2} \cw}) = O(\poly(n) N^{\frac{3}{2} \sqrt{n}})$ machine operations, with $O(N^{\cw} + n)$ memory words, where $n$ and $\cw$ are respectively the number of crossings and the carving-width of the diagram $\Diag(L)$.
\end{theorem}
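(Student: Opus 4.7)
My plan is to view the Reshetikhin--Turaev evaluation of $J_L^{\Cat}$ as the contraction of a tensor network supported on the $4$-valent multigraph $G = \Diag(L)$, and then to perform this contraction by dynamic programming on a carving decomposition of $G$. After putting $\Diag(L)$ in Morse position one decomposes it into elementary tangles (positive and negative crossings, cups, caps). Fixing bases of the free modules $V_1,\ldots,V_m$, each elementary morphism of $\Cat$ becomes a $\ZZ[q]$-valued tensor of rank at most $4$ whose indices range over sets of size at most $N$. The invariant $J_L^{\Cat}(V_1,\ldots,V_m)$ is then the full contraction of this tensor network along the edges of $G$. First compute a carving decomposition $(T,\eta)$ of $G$ of width $\cw$ using a polynomial-time algorithm (Seymour--Thomas, or a constant-factor approximation, which will not affect the exponent).

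Process $T$ bottom up. For a node $u$ let $G_u \subseteq G$ be the sub-network supported on the crossings indexed by the leaves under $u$, and let $\bdy G_u \subseteq E(G)$ be the edges straddling the corresponding cut, so $|\bdy G_u| \leq \cw$. Maintain a tensor $T_u$ indexed by the at most $N^{\cw}$ colourings of $\bdy G_u$, whose entry at a given colouring is the partial evaluation of all morphisms lying strictly inside $G_u$ with the boundary strands coloured accordingly. Each leaf carries a rank-$\leq 4$ tensor of constant size; an internal node $u$ with children $v,w$ is produced by contracting $T_v$ and $T_w$ along the edges that have just become internal, namely $\bdy G_v \cap \bdy G_w$.

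Set $a=|\bdy G_v|$, $b=|\bdy G_w|$, $c=|\bdy G_u|$. The three cuts at $u,v,w$ partition $V(G)$ into three blocks, and any edge of $G$ counted in $a+b+c$ is incident to exactly two of these blocks, hence appears in exactly two of the three boundaries; consequently $|\bdy G_v \cup \bdy G_w| = (a+b+c)/2$. Iterating over all colourings of this union performs the merge in $N^{(a+b+c)/2}\leq N^{\frac{3}{2}\cw}$ arithmetic operations in $\ZZ[q]$. Summing over the $O(n)$ internal nodes of $T$, and folding into $\poly(n)$ both the cost of a single $\ZZ[q]$-operation and the precomputation of $(T,\eta)$, gives the announced $O(\poly(n)\,N^{\frac{3}{2}\cw})$ running time.

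For memory, traverse $T$ in depth-first order and destroy each child tensor as soon as its parent has been produced; at any time only the ancestors of the current node and one sibling per ancestor are alive, so at most one boundary tensor plus combinatorial bookkeeping is kept per level, giving $O(N^{\cw}+n)$ words. The second bound follows from the planar-separator-theorem fact that the $4$-regular planar multigraph $G$ has $\cw = O(\sqrt{n})$. The two subtleties I expect to have to address are (i) writing the inductive DP formula cleanly enough that the $(a+b+c)/2$ count falls out of the definition of a carving, which is the combinatorial heart of the $\tfrac{3}{2}$ exponent, and (ii) controlling bit-complexity: each entry of an intermediate tensor is a polynomial in $q$ whose degree could in principle grow through repeated contraction, and one must check that this $q$-degree stays polynomial in $n$ so that each $\ZZ[q]$-operation indeed costs $\poly(n)$.
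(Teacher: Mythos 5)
Your high-level algorithm is the paper's: a bottom-up dynamic program over a carving decomposition of the $4$-valent crossing graph, keeping for each tree edge a boundary tensor with at most $N^{\cw}$ entries, and merging two children at cost $N^{|\bdy G_v\cup\bdy G_w|}=N^{(a+b+c)/2}\le N^{\frac{3}{2}\cw}$. Your counting argument for $(a+b+c)/2$ is correct and is exactly where the exponent $\frac{3}{2}$ comes from in the paper (there it appears as the bound $abc\le N^{\frac{3}{2}\cw}$ on the elementary compositions of Lemma~\ref{lem:elementary2}). The difference is one of formalism: you contract an unordered tensor network, whereas the paper works with \emph{ordered} categorical morphisms $\unit\to V_1\otimes\cdots\otimes V_{\cut(e)}$ and therefore does explicit planar bookkeeping (Jordan curves, bullet orderings, sliding strands under coupons, the small/large-bridge factorisations of Section~\ref{sec:mainop}) to bring the factors to be contracted into matching positions.

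That bookkeeping is not optional, and it hides the first of two genuine gaps. Your network is ``supported on $G$'' with rank-$4$ crossing tensors at the leaves, but the cups, caps and framing twists of the Morse decomposition are vertices of a refinement of $G$, not of $G$; they must be absorbed into edge tensors (composites of $d$, $b$, $\theta$ along each edge of $G$) and then into endpoint tensors, and one must verify that this is where the ribbon structure enters correctly --- concretely, the difference between $d_V\colon V^*\otimes V\to\unit$ and the wrong-way evaluation, i.e.\ between the quantum trace and the ordinary trace. Because the braiding of $\Cat$ is not symmetric, ``permute the tensor factors for free'' is precisely the step requiring proof; the paper's sliding operation (Figure~\ref{fig:twist}), which inserts twists $\theta_{V_\ell}$ and braidings $c^{\pm}$ whenever a strand is moved past a coupon, and the dualisation $g\mapsto g^*$ in the large-bridge case, are that proof. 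The second gap is the arithmetic: you flag the $q$-degree, but the real issue is the integer coefficients, which grow to $2^{\poly(n)}$, so without controlling them you can convert neither ``arithmetic operations in $\ZZ[q]$'' into machine operations nor the table size into the stated number of memory words. Section~\ref{sec:arithm} of the paper handles this by bounding degree ($O(n)$) and coefficients ($2^{O(n\sqrt n)}$ bits for fixed $\Cat$), evaluating at $O(n)$ integer points modulo word-sized primes, and interpolating, which is what makes each stored scalar $O(1)$ words. Two smaller points: ``one tensor per level'' literally yields $O(nN^{\cw})$ words, since the tree may have depth $\Theta(n)$; and a constant-factor approximation of the carving-width \emph{would} change the exponent --- you need the exact (or bond, width-$\cw$) decomposition, which for planar graphs is indeed polynomial-time computable.
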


In particular, this implies that, up to some preprocessing normalisation, computing any Reshetikhin-Turaev invariant derived from a simple Lie algebra $\gLie$ is \emph{fixed parameter tractable} (complexity class {\tt FPT}) in the carving-width of the input link diagram. Cases of interests are, in particular, $\gLie=\mathfrak{sl}(2,\CC)$ giving the $N^{\text{th}}$-coloured Jones polynomials, and $\gLie=\mathfrak{sl}(n,\CC)$ giving the $N^{\text{th}}$-coloured HOMFLYPT polynomials. 
This algorithm is:
\begin{enumerate}[1]
\item the first fixed parameter tractable algorithm, and---considering $\cw = O(\sqrt{n})$---sub-exponen-tial time algorithm, for quantum invariants of knots stated in such generality (previously known cases were the (uncoloured) Jones polynomial~\cite{Makowsky03}, and the (uncoloured) HOMFLYPT polynomial~\cite{Burton18}), 
\item an exponential improvement over Burton's $2^{O(\cw \log \cw)} \poly(n)$ time algorithm for the uncoloured HOMFLYPT polynomial~\cite{Burton18}, and generally a low exponent ($\frac{3}{2}$) singly exponential algorithm for quantum invariants\footnote{Note that previous algorithms~\cite{Makowsky03} are expressed in terms of tree-width, which is proportional to the carving-width, in consequence exponents are not directly comparable.}.
\end{enumerate}

In Section~\ref{sec:background} we recall the definition of quantum invariants derived from ribbon categories, and notions of parameterized complexity. In Section~\ref{sec:graphicalalgo} we introduce a high-level parameterized algorithm based on graphical calculus and a tree embedding, then detail in Section~\ref{sec:mainop} the main operation of the algorithm. In Section~\ref{sec:algebraic} we develop the implementation of the algorithm in the case of a ribbon category of $\Ring$-modules, and analyse its arithmetic complexity in Section~\ref{sec:arithm}, in the case $\Ring = \ZZ[q]$. This last study implies that, when the type of invariant is part of the input, computing a quantum invariant is in the complexity class {\tt XP}.

\section{Background}
\label{sec:background}

We introduce the necessary notions from knot theory, quantum topology, and parameterized complexity.

\parag{Tangles and diagrams.} A \emph{tangle} is a piecewise linear embedding of a collection of arcs and circles into $\RR^2 \times [0,1]$, such that the arcs endpoints, called \emph{bases}, belong to the top or bottom boundaries $\RR^2 \times \{0\}$ and $\RR^2 \times \{1\}$. A tangle intersecting $i$ times $\RR^2 \times \{0\}$ and $j$ times $\RR^2 \times \{1\}$ is an $(i,j)$-tangle.

A \emph{link} is a tangle whose connected components are all closed curves (a $(0,0)$-tangle), and a \emph{knot} is a $1$-component link. We also consider link diagrams on the sphere $S^2$. An \emph{orientation} on a tangle is an orientation of each tangle component. 
Two tangles are equivalent iff they differ by an ambient isotopy of $\RR^2 \times [0,1]$ maintaining the boundary fixed.

A tangle \emph{diagram} is a projection of the tangle into the plane, induced by a projection of $\RR^2 \times [0,1]$ into $\RR \times [0,1]$, sending $\RR^2 \times \{0\}$ and $\RR^2 \times \{1\}$ to $\RR \times \{0\}$ and $\RR \times \{1\}$ respectively. In a tangle diagram, the only multiple points are \emph{crossings}, at which one section of the tangle crosses under or over another one transversally. 

Component orientations are pictured with arrow heads, and a $k \in \ZZ$ framing is pictured by $k$ \emph{positive twists} if $k > 0$, and $k$ \emph{negative twists} is $k < 0$. See Figure~\ref{fig:diagram}.

We refer to~\cite{lickorish1997-book} for more details on knot theory. 

\begin{figure}[t]
\centering
\includegraphics[width=9cm]{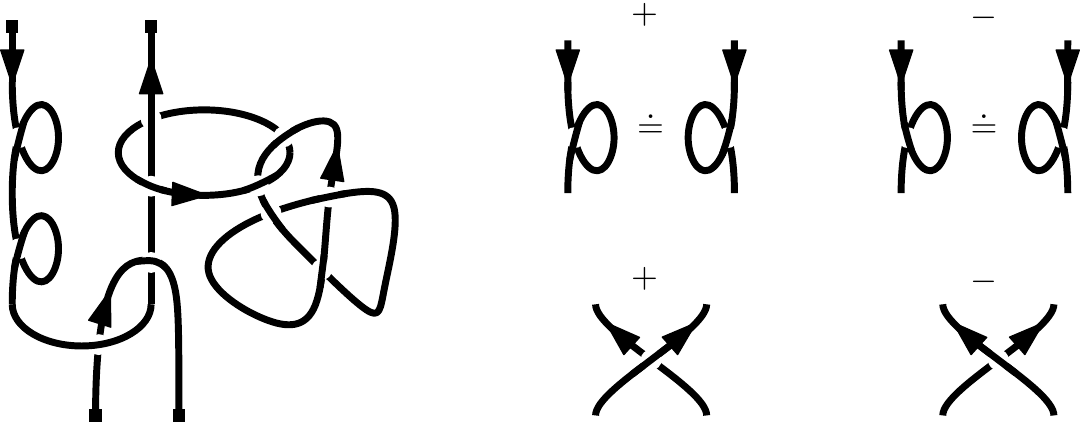}
\caption{Left: Diagram of a $4$-components oriented framed tangle, whose top left component has framing $+2$. Right: Positive/negative twists and crossings. The $\doteq$ symbol is an equivalence of diagrams.}
\label{fig:diagram}
\end{figure}

\parag{Ribbon categories and quantum invariants.} 
We refer to Turaev's monograph~\cite{turaev10-book} for the categorical formulation of quantum invariants. We only introduce the necessary notions.

Intuitively, a strict ribbon category is an abstraction of the category of modules over a commutative ring, with their usual tensor product. Some morphisms---called \emph{braidings}, \emph{twist}, \emph{evaluation and co-evaluation}---are distinguished in order to establish a connection between topology (tangles and knots) and algebra, via graphical calculus. 

More precisely, a \emph{strict ribbon category} $\Cat$ is a category with a unit object $\unit$ and which is equipped, for any objects $U,V,U',V'$ and morphisms $f \colon V \to V'$, $g\colon U \to U'$, with:

\begin{enumerate}[(a)]
\item an associative tensor product assigning to $U$ and $V$ an object $U \otimes V$, and to $f$ and $g$ a morphism $f \otimes g \colon U \otimes V \to U' \otimes V'$,
\item a natural braiding isomorphism $c_{U,V} \colon U \otimes V \to V \otimes U$,
\item a duality associating to any $V$ a dual object $V^*$, together with {\em co-evaluation} morphisms $b_V \colon \unit \to V \otimes V^*$ and {\em evaluation} morphisms $d_V \colon V^* \otimes V \to \unit$,
\item a natural twist isomorphism $\theta_V \colon V \to V$,
\item and where $\Hom_{\Cat}(\unit,\unit)$ has the structure of a commutative ring $\Ring$.
\end{enumerate}

By convention, the ``tensor product of zero objects'' is equal to $\unit$. 
In a strict ribbon category, these objects and morphisms satisfy additional compatibility constraints, that are necessary to state Theorem~\ref{thm:invsphere} below. 

For example, the category of modules over a commutative ring $\Ring$ with standard tensor product, and equipped with the trivial braiding $u \otimes v \mapsto v \otimes u$, forms a strict ribbon category. In this case, the ring $\Ring$, seen as a module over itself, is the unit object $\unit$, and any morphism $\Ring \to \Ring$ is a multiplication by a scalar $\tau \in \Ring$. Hence $\Hom_{\Cat}(\unit,\unit)$ is isomorphic to the commutative ring $\Ring$ itself. 
For invariants derived from quantum groups, we mainly focus on the category of $\Ring$-modules, generally free of finite dimension but with more complex braidings than the trivial ones. The ring $\Ring$ is $\ZZ[q]$ (up to normalisation), the ring of one-variable polynomials with integer coefficients. Morphisms between free modules are represented by matrices with $\Ring$-coefficients.

\parag{Graphical calculus and coloured tangles.} 
Fix a strict ribbon category $\Cat$. A \emph{colouring} of a link $L$, with $m$ ordered components $L_1, \ldots ,L_m$, is an assignment of an object $V_i \in \Cat$, $1 \leq i \leq m$, to every component $L_i$ of $L$.

A link diagram is considered in \emph{standard form} if it can be decomposed into the following pieces, described in Figure~\ref{fig:penrosefunctor}: $(i)$ vertical strands, $(vi)$ \& $(vii)$ positive and negative crossings, $(viii)$ \& $(ix)$ positive and negative right twists, and $(x)$ \& $(xi)$ caps and cups. See Figure~\ref{fig:ex_penrosefunctor} for a Hopf link in standard position. Any link (or tangle) diagram can be moved into standard form.

Rules $(i)$ to $(xi)$ of Figure~\ref{fig:penrosefunctor} gives the conversion from coloured tangle to $\Cat$-morphism, called {\em Penrose functor}. Specifically, given a coloured link diagram $\Diag(L)$, the Penrose functor turns the diagram into a morphism, following the rules:
\begin{description}
\itemsep0.5em 
\item[(o)] A morphism $f \colon U \to V$ in $\Cat$ is represented graphically by a box, aligned with $x$- and $y-$axis, called \emph{coupon}, with incoming vertical $V$-coloured strands (top) and outgoing vertical $U$-coloured strand (bottom),
\item[(i)] reversing a component orientation changes a colour $V$ to its dual $V^*$,
\item[(ii)] two parallel strands coloured $U$ and $V$ are equivalent to a single strand coloured $U \otimes V$,
\item[(iii)] a vertical strand coloured $V$ is equivalent to the identity morphism $\id_V$,
\item[(iv)] a morphism $g$ above another one $f$ is equivalent to there composition $g \circ f$,
\item[(v)] two morphisms $h_1$ and $h_2$ side by side are equivalent to their tensor product $h_1 \otimes h_2$,
\item[(vi) \& (vii)] a positive crossing is equivalent to a braiding morphism, a negative crossing is equivalent to the inverse of the braiding morphism,
\item[(viii) \& (ix)] positive and negative twists are equivalent to the twist morphism and its inverse respectively,
\item[(x) \& (xi)] caps and cups are equivalent to evaluation and co-evaluation respectively. 
\item[(xii)] the dual morphism $f^* \colon V^* \to U^*$ of a morphism $f \colon U \to V$ is given by the graphical equation (xii) or, equivalently, by:
\[
	f^* = (d_v \otimes \id_{U^*})\circ (\id_{V^*} \otimes f \otimes \id_{U^*})\circ (\id_{V^*} \otimes b_U).
\]
\end{description}

The morphisms are applied to the objects colouring the entering and leaving strands. Figure~\ref{fig:penrosefunctor} gives the morphism associated to the Hopf link coloured with objects $U$ and $V$.

Consequently, for a category $\Cat$, the Penrose functor associates to any coloured link a morphism $\unit \to \unit$. More generally, it associates to a coloured $(i,j)$-tangle a morphism $U_1 \otimes \ldots \otimes U_i \to V_1 \otimes \ldots \otimes V_j$, for the $U$s and $V$s colouring the bottom and top bases respectively. 

If the ordered components of a link $L$ are coloured $V_1, \ldots, V_m$, this morphism is written:
\[
	J^{\Cat}_L(V_1, \ldots, V_m) \in \Hom_{\Cat}(\unit,\unit).
\]

\begin{figure}[t]
\centering
\includegraphics[width=1.0\textwidth]{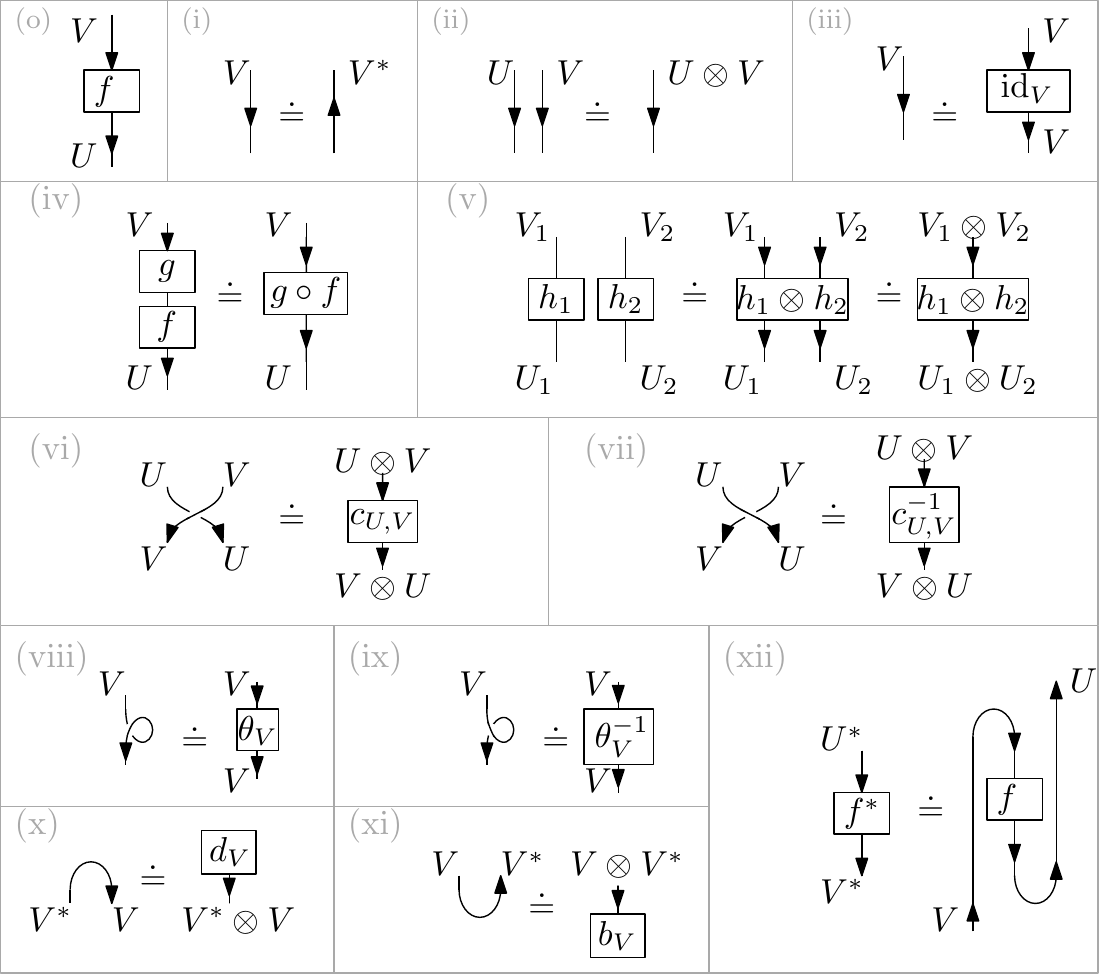}
\caption{Graphical calculus induced by Penrose functor.}
\label{fig:penrosefunctor}
\end{figure}

\begin{figure}[t]
\centering
\includegraphics[width=13.0cm]{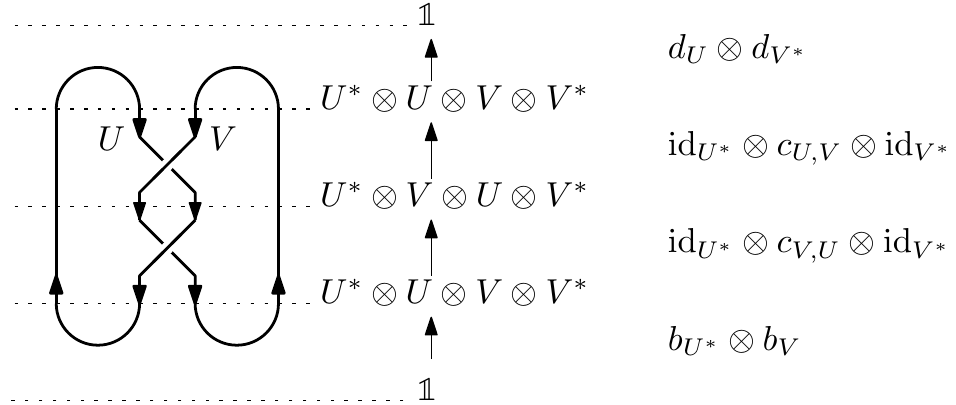}
\caption{Application of Penrose functor to the Hopf link coloured by objects $U$ and $V$ from a strict ribbon category, leading to a $\unit \to \unit$ morphism by composition.}
\label{fig:ex_penrosefunctor}
\end{figure}

Strict ribbon category produce topological invariants, called \emph{quantum invariants}:

\begin{theorem}[{\cite{Reshetikhin90,turaev10-book}}]\label{thm:invsphere}
Let $\Diag(L)$ be a diagram of an $m$-components link $L$ on $S^2$, and let $\Cat$ be a strict ribbon category. Let $V_1, \ldots, V_m$ be a colouring of the components of $L$. The quantity $J_{L}^{\Cat}(V_1, ..., V_m)$ produced by the Penrose functor is invariant by ambient isotopy of $S^2$ and Reidemeister moves on $\Diag(L)$. It is consequently a topological invariant of the coloured link $L$.
\end{theorem}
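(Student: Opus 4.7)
The plan is to reduce the statement to a finite list of local checks via a Reidemeister-type theorem, and then verify each check directly from the axioms of a strict ribbon category. First I would invoke the framed Reidemeister theorem for oriented diagrams on $S^2$: two such diagrams represent ambient-isotopic framed oriented links in $S^2$ if and only if they differ by a finite sequence of planar isotopies, the framed Reidemeister moves (R1-framed, R2, R3), and one additional ``sphere move'' allowing a strand to be pushed through the point at infinity. Since the Penrose functor is defined on standard form diagrams by iterated composition and tensor product of the elementary pieces (i)--(xi), it suffices to check that two standard forms related by any one of these local moves produce the same element of $\Hom_{\Cat}(\unit, \unit)$.

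Second, I would verify that the morphism does not depend on the chosen decomposition of a fixed diagram into standard pieces. Any two horizontal slicings of the same diagram yield compositions and tensor products that agree by the strict monoidal interchange law
\[
(f_2 \otimes g_2) \circ (f_1 \otimes g_1) \;=\; (f_2 \circ f_1) \otimes (g_2 \circ g_1),
\]
while isotopies that straighten an adjacent cap--cup pair reduce to the zig-zag (duality) identities
\[
(\id_V \otimes d_V)\circ(b_V \otimes \id_V) \;=\; \id_V, \qquad (d_V \otimes \id_{V^*})\circ(\id_{V^*} \otimes b_V) \;=\; \id_{V^*}.
\]
Together these handle every planar isotopy that does not create or destroy crossings, twists, or critical points of the height function.

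Third, I would check each Reidemeister move separately. R2 follows from the invertibility of the braiding, $c_{V,U}^{-1} \circ c_{U,V} = \id_{U \otimes V}$. R3 is exactly the Yang--Baxter/hexagon identity built into any braided monoidal category. The framed R1 move, which replaces a positive (resp.\ negative) kink on a $V$-coloured strand by $\theta_V$ (resp.\ $\theta_V^{-1}$), is the defining relation between kinks and twists in a ribbon category. For the sphere move, I would recognise the closure of a strand ``over the top'' of the diagram as a categorical quantum trace $\operatorname{tr}_q$, and prove its cyclicity $\operatorname{tr}_q(fg) = \operatorname{tr}_q(gf)$ using the ribbon compatibility $\theta_{V^*} = (\theta_V)^*$ together with naturality of the braiding and the duality axioms.

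The main obstacle is not any single identity---each reduces directly to a ribbon axiom---but the combinatorial bookkeeping needed to show that the axioms are jointly sufficient. Two subtleties deserve special attention. The \emph{framed} R1 is essential: the ordinary R1 move is not preserved, as it multiplies the morphism by a factor of $\theta_V$; this is precisely why the theorem is stated for framed links. The sphere move genuinely requires the full ribbon structure (twist plus compatibility with duality), not merely a braided monoidal structure; this is exactly how the ribbon axioms have been tuned to make the construction well-defined on $S^2$.
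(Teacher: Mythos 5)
This theorem is quoted in the paper from \cite{Reshetikhin90} and \cite{turaev10-book} without proof, and your outline is essentially the standard argument given in those references: present the category of coloured framed oriented tangles by generators and relations (elementary pieces modulo planar isotopy, the interchange law, the zig-zag identities, framed R1, R2, R3, and the trace/cyclicity relation for $S^2$), then verify each relation against the ribbon axioms. The plan is sound; the only place where the bookkeeping is thinner than the cited proofs is that the caps and cups with the ``other'' orientation are not primitive in a category with one-sided duality, so straightening those requires the derived right-duality morphisms (built from $b_V$, $d_V$, the braiding, and the twist) and their own zig-zag identities, which your sketch subsumes silently.
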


When $\Cat$ is the category of $\Ring$-modules, $J_{L}^{\Cat}(V_1, ..., V_k) \in \Hom(\unit,\unit) \cong \Ring$ is identified to a scalar in $\Ring$.

\parag{Graph parameters.} 
The \emph{carving-width}, also known as \emph{congestion}, is a graph parameter introduced by Seymour and Thomas~\cite{Seymour1994}.

\begin{definition}\label{Def:Carvingwidth}
Let $\Graph = (V,E)$ be a graph on $n$ vertices, with loops and multiple edges. Let $\Tree$ be an unrooted binary tree, with all internal nodes of degree $3$, and with $n$ leaves. 

An {\em embedding} $\phi$ of $\Graph$ into $\Tree$ is a bijective mapping between the nodes of $\Graph$ and the leaves of $\Tree$. Every edge $e$ of $\Tree$ induces a partition of the vertices of $\Graph$ into two sets, $V = U_e \sqcup V_e$, inherited from the partition of $\Tree \setminus e$ into two trees. Let $\cut(e)$ denote the number of edges in $\Graph$ between $U_e$ and $V_e$, called the weight of $e$.

The {\em congestion} of an embedding $(\Tree,\phi)$ is the maximal weight of a tree edge:
\[
	\cng(\Tree,\phi) = \max_{e \ \text{edge of} \ \Tree} \cut(e),
\]

The {\em carving-width} $\cng(\Graph)$ of a graph $\Graph$ is the minimal congestion over all its embeddings into binary trees. 
The \emph{carving-width $\cng(\Diag(L))$ of a link diagram $\Diag(L)$} is the carving-width of the $4$-valent planar graph it realises. The carving-width $\cng(L)$ of a link $L$ is the minimal carving-width of any of its diagrams.
\end{definition}

The carving-width of a graph is closely related to its \emph{tree-width}~\cite{robertson86-algorithmic}, which plays a major role in combinatorial algorithms. 

\begin{theorem}[Theorem~1 of \cite{DBLP:journals/jct/Bienstock90}]
\label{Thm:boundcngtw}
Let $\Graph$ be a graph of maximal degree $\delta$. Then,
\[
  \frac{2}{3} (\tw(\Graph) +1) \leq \cng(\Graph) \leq \delta (\tw(\Graph) +1).
\] 
\end{theorem}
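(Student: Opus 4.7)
The statement asserts two inequalities, and I would prove each by an explicit construction that converts one type of decomposition into the other.

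For the upper bound $\cng(\Graph) \le \delta(\tw(\Graph)+1)$, the plan is to start from a tree decomposition $(T,\{X_t\})$ of width $\tw$ and build an embedding of $\Graph$ into a binary tree $\Tree$. First, I would reduce $T$ to a binary tree by standard smoothing (subdividing high-degree nodes and suppressing degree-2 nodes), preserving all bags. Then I would attach the vertices of $\Graph$ as leaves to the nodes of $T$: for each vertex $v$ of $\Graph$, pick one bag containing $v$ and hang $v$ as a pendant leaf there, then clean up so the resulting tree $\Tree$ is binary with exactly $|V(\Graph)|$ leaves (one per vertex). For any edge $e$ of $\Tree$, the partition $V = U_e \sqcup V_e$ it induces is ``separated'' in $(T,\{X_t\})$ by some bag $X_t$ of size $\le \tw+1$, in the sense that every edge of $\Graph$ between $U_e$ and $V_e$ must have at least one endpoint in $X_t$ (this uses the tree-decomposition axiom that each edge of $\Graph$ lives in a common bag). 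Since each vertex in $X_t$ has degree at most $\delta$, the cut contains at most $\delta(\tw+1)$ edges, giving $\cng(\Tree,\phi)\le\delta(\tw+1)$.

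For the lower bound $\frac{2}{3}(\tw(\Graph)+1)\le \cng(\Graph)$, the plan is to start from an optimal carving embedding $(\Tree,\phi)$ of congestion $w:=\cng(\Graph)$ and manufacture a tree decomposition. Let $T'$ be $\Tree$ with the leaves removed, so every node of $T'$ has degree at most $3$ in $\Tree$. For each node $v$ of $T'$, define the bag
\[
X_v \;=\; \{\, x \in V(\Graph) \,:\, x \text{ is an endpoint of some edge of } \Graph \text{ crossing an edge of } \Tree \text{ incident to } v\,\}.
\]
The key counting step is that each edge $xy$ of $\Graph$ contributes its two endpoints to bags along a path in $T'$ (the path from $\phi(x)$'s neighbour to $\phi(y)$'s neighbour). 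At an internal node $v$ of degree $3$, the bag $X_v$ is determined by the three incident cuts, each of size at most $w$; but every edge counted at $v$ occupies \emph{two} of the three cuts (entering and leaving), so the total number of endpoints listed is at most $3w$ counted with multiplicity $2$, giving $|X_v|\le \tfrac{3w}{2}$. I would then verify the two tree-decomposition axioms: every edge of $\Graph$ has both endpoints in some common bag (namely at any node $v$ on the path of crossings between $\phi(x)$ and $\phi(y)$), and the set of nodes whose bag contains a given vertex $x$ forms a connected subtree of $T'$ (because the edges of $\Graph$ incident to $x$ all emanate from the single leaf $\phi(x)$, so the nodes where $x$ appears are precisely those adjacent to the subtree of $\Tree$ reached by these edges from $\phi(x)$).

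The expected main obstacle is the lower-bound direction: the counting argument giving $|X_v|\le \tfrac{3w}{2}$ is delicate because one must avoid overcounting endpoints shared between incident cuts, and must handle degenerate cases where $v$ is adjacent to a leaf of $\Tree$ (so one of the ``cuts'' is just the star of a single vertex of $\Graph$). Once the bag size bound is established, $\tw(\Graph)+1\le \max_v |X_v|\le \tfrac{3w}{2}$ follows, yielding the claimed inequality. The upper-bound direction is mostly bookkeeping and should pose no serious difficulty beyond cleanly describing the pendant-attachment operation.
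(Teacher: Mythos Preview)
The paper does not prove this theorem at all: it is quoted as Theorem~1 of Bienstock's paper and used as a black box, so there is no ``paper's own proof'' to compare against. I will therefore assess your proposal on its own merits.

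Your upper-bound argument is fine and is the standard construction.

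Your lower-bound argument, however, contains a genuine gap in the key counting step. With your bag
\[
X_v=\{x\in V(\Graph):\text{$x$ is an endpoint of some $\Graph$-edge crossing a $\Tree$-edge incident to $v$}\},
\]
the three incident cuts at a degree-$3$ node $v$ have total weight $w_1+w_2+w_3\le 3w$, and indeed every $\Graph$-edge passing through $v$ lies in exactly two of these cuts, so the number of such $\Graph$-edges is at most $3w/2$. But $X_v$ collects \emph{both} endpoints of each such edge, and distinct edges may have all distinct endpoints; the honest bound your construction yields is
\[
|X_v|\;=\;\sum_{i=1}^{3}\bigl|\{x\in A_i:\text{$x$ has a neighbour outside }A_i\}\bigr|\;\le\;w_1+w_2+w_3\;\le\;3w,
\]
not $3w/2$. (Concretely: take $A_1,A_2,A_3$ of size $w$ each, with a perfect matching of size $w/2$ between every pair; then each $w_i=w$ and every one of the $3w$ vertices lies in $X_v$.) So as written your argument proves only $\tw(\Graph)+1\le 3\,\cng(\Graph)$, a factor of $2$ off.

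Recovering the sharp constant $3/2$ requires a different bag definition. One standard route is to pass through branch-width: from the carving one builds separators of size $\le w$ on the edges of $\Tree$, and then invokes the Robertson--Seymour inequality $\tw+1\le \lfloor 3\,\mathrm{bw}/2\rfloor$, whose proof uses a submodularity/median argument on triples of separations at each degree-$3$ node rather than the naive union you take. Your identification of the obstacle (``the counting argument giving $|X_v|\le 3w/2$ is delicate'') is exactly right; the issue is that the delicacy is not handled, and the bound as stated is false for your $X_v$.
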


Carving-width has however several advantages over tree-width. Notably, the former has been successfully used in low dimensional topology~\cite{HuszarS19,HuszarSW18,MariaP19,SchleimerdMPS19}.

First, for planar graphs---such as link diagrams---an optimal tree embedding realising the carving-width is polynomial time computable~\cite{Seymour1994}, when no efficient exact algorithm is known for computing an optimal tree decomposition. Second, optimal tree embeddings of planar graphs can be realised topologically, as stated below.

A \emph{bridge} in a connected graph $G$ is an edge of $G$ whose removal splits $G$ into more than one connected component. A tree embedding $(\Tree,\phi)$ of $G$ is \emph{bond} if the two vertex sets $U_e$ and $V_e$ from the cut associated to an edge $e$ of $\Tree$ induce connected sub-graphs in $G$.

\begin{theorem}[{\cite[Theorem 5.1]{Seymour1994}}]\label{thm:bridge}
Let $G$ be a simple connected bridgeless graph with more than two vertices. If $G$ has carving-width $\cw$ then there exists a bond tree embedding of $G$ of width $\cw$.
\end{theorem}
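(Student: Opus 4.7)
My plan is to start from any optimal embedding and modify it into a bond one without increasing the congestion, via a potential-function argument powered by submodularity of the cut function. Fix an optimal tree embedding $(\Tree, \phi)$ of $\Graph$ with $\cng(\Tree, \phi) = \cw$, and for $S \subseteq V(\Graph)$ write $d(S) = |E_{\Graph}(S, V \setminus S)|$. Introduce the potential
\[
	\Phi(\Tree, \phi) \;=\; \sum_{e \in E(\Tree)} \bigl( c(\Graph[U_e]) + c(\Graph[V_e]) \bigr),
\]
where $c(H)$ is the number of connected components of a subgraph $H$. Since $c \geq 1$, we have $\Phi \geq 2|E(\Tree)|$, with equality precisely when the embedding is bond. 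Among all optimal embeddings I would pick one minimising $\Phi$ and argue that such a minimiser must already be bond.

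Suppose not; then some tree edge $e$ is bad. Without loss of generality $\Graph[U_e]$ splits as $\Graph[A] \sqcup \Graph[B]$, where $A$ is one connected component of $\Graph[U_e]$ and no $\Graph$-edge joins $A$ to $B$. From $|E(A,B)| = 0$ and the standard modular identity,
\[
	d(A) + d(B) \;=\; 2|E(A,B)| + d(A \cup B) \;=\; d(U_e) \;\leq\; \cw,
\]
so $d(A), d(B) \leq \cw$. The bridgeless hypothesis further gives $d(A), d(B) \geq 2$, so neither side is trivial. I would now rebuild the subtree of $\Tree$ carrying the leaves of $U_e$ into a subtree whose topmost split isolates the $A$-leaves from the $B$-leaves, keeping the $V_e$-side of $\Tree$ untouched, and recursively apply the same exchange whenever a bad cut persists inside the rebuild.

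The heart of the argument is that the rebuild can be realised so that every cut of the new embedding $(\Tree', \phi')$ still has weight $\leq \cw$, while $\Phi$ strictly decreases. For the new top split the cut is $(A, V \setminus A)$ of weight $d(A) \leq \cw$ by the display above, and it is bond on the $A$ side since $A$ is a connected component of $\Graph[U_e]$. The main obstacle is verifying the other side and the recursively created inner cuts: I would track how each rebuilt inner cut of the $U_e$-subtree relates to an ancestor cut of the original $\Tree$ via submodularity, and exploit connectivity of $\Graph$ routed through $V_e$ together with the bridgeless and ``more than two vertices'' hypotheses to control connectedness of $V \setminus A$ (in particular, when $A$ or $B$ happens to be a separating set of $\Graph$). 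Once all these cut estimates are in place, $\Phi(\Tree', \phi') < \Phi(\Tree, \phi)$ contradicts the minimality of the starting embedding, so the $\Phi$-minimum is attained by a bond embedding of width $\cw$.
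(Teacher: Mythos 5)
This statement is quoted from Seymour and Thomas (\cite[Theorem~5.1]{Seymour1994}); the paper offers no proof of its own, so your attempt can only be judged on its internal merits. The strategy you outline---uncross a non-bond cut by splitting $U_e$ into a component $A$ and its complement $B$, duplicate the subtree below $e$ into an $A$-copy and a $B$-copy, and drive a component-counting potential $\Phi$ downward---is a reasonable plan, and the estimates you do carry out are correct: since $E(A,B)=\emptyset$ one has $d(A)+d(B)=d(U_e)\le\cw$, and $2$-edge-connectivity gives $d(A),d(B)\ge 2$. You could even have closed the inner-cut bound cleanly rather than appealing vaguely to submodularity against ancestor cuts: every rebuilt cut-set has the form $X\cap A$ or $X\cap B$ for some old cut-set $X\subseteq U_e$, and the same modularity identity gives $d(X)=d(X\cap A)+d(X\cap B)$, hence both restricted cuts have weight at most $d(X)\le\cw$.

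The genuine gap is that the step you yourself call ``the heart of the argument'' is asserted, not proved, and it is exactly where the theorem lives. Two specific problems. First, the strict decrease $\Phi(\Tree',\phi')<\Phi(\Tree,\phi)$ does not follow from anything you wrote: the edges of the two restricted copies of the subtree are not in weight-comparable bijection with the edges of the original subtree (a single old edge $f$ can give rise to a cut in \emph{both} copies, namely $U_f\cap A$ and $U_f\cap B$, while old edges lying in purely-$A$ or purely-$B$ parts get contracted away), so comparing $\sum_e\bigl(c(\Graph[U_e])+c(\Graph[V_e])\bigr)$ before and after requires careful bookkeeping on both the $U$-sides and the complement sides, none of which is supplied. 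Second, the connectedness of the complement sides (e.g.\ of $\Graph[V\setminus A]$), which is half of the definition of ``bond'' and is the place where bridgelessness must do real work beyond the inequality $d(A)\ge 2$, is explicitly left open. Until those two points are established, the argument is a plan with its kernel missing; Seymour and Thomas's actual proof is devoted precisely to this reorganisation, and it is not a formality.
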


Up to a subdivision of multiple edges, which does not increase carving-width, a link diagram can be made simple, as a graph. Being $4$-valent, it is bridgeless, and, if connected, it consequently admits a bond tree embedding of minimal congestion. We interpret a bond tree embedding of a planar graph (on the sphere $S^2$) as a collection of disjoint Jordan curves $\lambda_e \subset S^2$, one for each edge $e$ of $\Tree$, realising the cut $U_e \sqcup V_e$~\cite{SchleimerdMPS19}.

For planar graphs, a bond tree embedding of minimal congestion can be computed in polynomial time~\cite{Gu:2008:OBP:1367064.1367070,Seymour1994}.


\section{Fixed parameter tractable algorithm via graphical calculus}
\label{sec:graphicalalgo}

Let $\Cat$ be a strict ribbon category, and let $L$ be an oriented link with $m$ components $L_1, \ldots, L_m$. Let $\Diag(L)$ be an oriented link diagram of $L$, where each link component $L_i$ is coloured by an object $V_i$ from the category $\Cat$, such that the Penrose functor gives an isotopy invariant of $L$ associated to its colouring, as described in Theorem~\ref{thm:invsphere}.

Without loss of generality, we assume that the diagram $\Diag(L)$ is connected as a graph, and has at least $2$ crossings. It follows from the definition of Penrose functor that the quantum invariant of a separable link $L \cup L'$ is the product of the invariants of $L$ and $L'$, and they can be computed separately.  


\subsection{Tree embedding of link diagrams.} 

Let $(\Tree, \phi)$ be a bond tree embedding of the planar graph of $\Diag(L)$, and root it by subdividing an arbitrary tree edge, picking the centre as the root. All edges of $\Tree$ have now a parent and child endpoint. By convention, we add a ``half-edge'' on top of the tree, having the root as child. Every inner node in $\Tree$ has consequently degree $3$, with two edges ``going down'', and one edge ``going up''.

Let $e$ be an edge of $\Tree$ with child node $x$, and $X$ the set of crossings mapped to the leaves of the subtree $\Tree_x$ rooted at $x$. 
According to Theorem~\ref{thm:bridge}, there exists a Jordan curve $\lambda_e$ separating $X$ from the rest of the diagram. The diagram being on the sphere, we draw the tangle ``inside'' the Jordan curve when we represent it on the plane. 

To edge $e$ corresponds a $(0,\cut(e))$-tangle $T$, spanned by the crossings $X$ and contained ``inside'' $\lambda_e$. We mark an arbitrary but fixed ``bullet'' point on $\lambda_e$ and order the bases of $T$ counter-clockwise. We get a $(0,\cut(e))$-tangle by isotopically sliding all bases to the top boundary, such that the first base in the bullet ordering is rightmost on the top boundary. See Figure~\ref{fig:atom} for examples of $(0,\cut(e))$-tangles at the tree leaves, and Figure~\ref{fig:spin} (Left) for bases ordered by bullet ordering.

In the process of the algorithm below, bullet orderings are assigned on the fly.


\subsection{Tree traversal algorithm.} 
Let $\Diag(L)$ be coloured by objects of the category $\Cat$. To every edge $e$ of weight $\cut(e)$ in $\Tree$, the Penrose functor assigns a $\Cat$-morphism $f_e \colon \unit \to V_1 \otimes \ldots \otimes V_{\cut(e)}$ to the associated tangle, where $V_1, \ldots , V_{\cut(e)}$ are the colours of the strands intersecting the Jordan curve $\lambda_e$.

The morphism associated to the half-edge at the root is a $\unit \to \unit$ morphism, because the corresponding Jordan curve does not intersect the link diagram. This morphisms gives the invariant $J_L^\Cat \in \Ring$ of Theorem~\ref{thm:invsphere}. 
All edge morphisms are computed recursively following a depth first traversal of $\Tree$. We describe the base morphisms assigned to the edges whose child node is a leaf, and we describe an algorithm for inner edges in the next section.

\begin{figure}
\centering
\includegraphics[width=12cm]{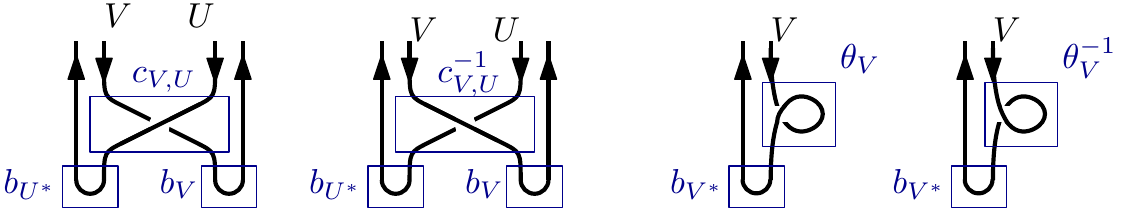}
\caption{The four tangles and associated morphisms at the tree leaves. From left to right: Equations~(\ref{eq:atom1}),~(\ref{eq:atom2}),~(\ref{eq:atom3}), and~(\ref{eq:atom4}). The marked bullet point is on the left of each diagram, and is selected such that only these four morphisms are encountered.}
\label{fig:atom}
\end{figure}

\subsection{Morphisms at the leaves.}
\label{subsec:leaves}

Up to reorientation of the strands, which algebraically consists of dualising colours, we can restrict to four base morphisms:

\begin{minipage}{0.45\textwidth}
  \begin{equation}\label{eq:atom1}
    (\id_{U^*} \otimes c_{V,U} \otimes \id_{V^*}) \circ (b_{U^*} \otimes b_V)
  \end{equation}
\end{minipage}
\hfill\begin{minipage}{0.45\textwidth}
  \begin{equation}\label{eq:atom2}
    (\id_{U^*} \otimes c_{V,U}^{-1} \otimes \id_{V^*}) \circ (b_{U^*} \otimes b_V)
  \end{equation}
\end{minipage}

\begin{minipage}{0.45\textwidth}
  \begin{equation}\label{eq:atom3}
    (\id_{V^*} \otimes \theta_V) \circ b_{V^*}
  \end{equation}
\end{minipage}
\hfill\begin{minipage}{0.45\textwidth}
  \begin{equation}\label{eq:atom4}
    (\id_{V^*} \otimes \theta_V^{-1}) \circ b_{V^*}
  \end{equation}
\end{minipage}

\medskip

They correspond graphically to the diagrams in Figure~\ref{fig:atom}, where the bullet ordering is chosen to restrict to these four cases.


\subsection{Merging morphisms at tree nodes.} 
Every inner node $x$ of $\Tree$ is the parent node of two edges $e_1$ and $e_2$, and the child of an edge $e$. Given the morphisms $f_{e_1}$ and $f_{e_2}$ for edges $e_1$ and $e_2$ respectively, we construct the morphism $f_e$ for edge $e$.

First, note that the bullet ordering of the strands intersecting $\lambda_{e_1}$ and $\lambda_{e_2}$ leads to three configurations when representing morphisms $f_{e_1}$ and $f_{e_2}$ with coupons ; see Figure~\ref{fig:spin} where thick lines represent sets of parallel tangle strands. By hypothesis, morphisms on tree edges have domain $\unit$. The coupons for $f_{e_1}$, $f_{e_2}$, and $f_e$ (the outer coupon) are obtained by a plane isotopy forcing the strands to intersect coupons on their top side, and putting bullets on the coupons' left sides. The bullet of the outer coupon $f_e$ is selected so as to restrict to the three configurations of Figure~\ref{fig:spin}.

\begin{figure}[t]
\centering
\includegraphics[width=14cm]{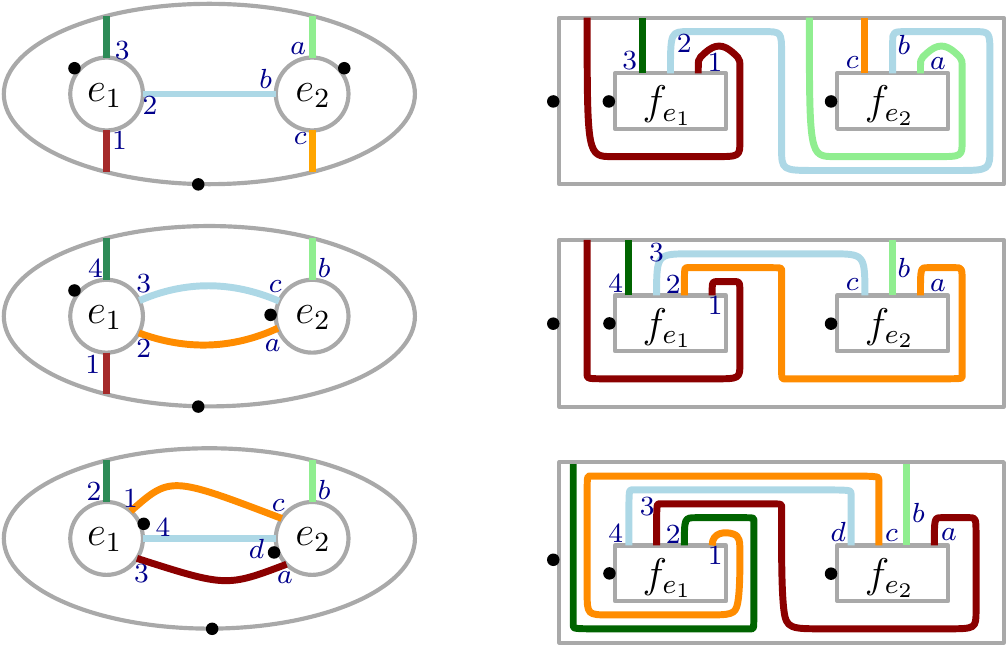}
\caption{Merging two sub-trees. Left: Planar embeddings of the diagram with Jordan curves $\lambda_{e_1}$, $\lambda_{e_2}$ (inner circles) and $\lambda_e$ (outer circle), depending on the position of the bullets for $\lambda_{e_1}$ and $\lambda_{e_2}$. The bold lines connecting the Jordan curves represent multiple parallel strands connecting the corresponding tangles. Right: Coupons for $f_{e_1}$, $f_{e_2}$ and $f_e$ (outer coupon) obtained after plane isotopy. The bullet for $\lambda_e$ is selected so as to restrict to these three cases.}
\label{fig:spin}
\end{figure}


\section{Factorisation of morphisms at tree nodes} 
\label{sec:mainop}
Given the morphisms $f_{e_1}$ and $f_{e_2}$ in Figure~\ref{fig:spin}, we describe graphically a factorisation scheme to obtain the morphism $f_e$. 

\subsection{Sliding and canonical form.}
The canonical form for morphisms to be merged is depicted in the top left corner of Figure~\ref{fig:proof2}. It consists of two side-by-side morphisms $g_1$ and $g_2$, bridged by parallel strands coloured $U_1, \ldots U_k$. All other strands go vertically.

Given morphisms $f_{e_1}$ and $f_{e_2}$ in Figure~\ref{fig:spin}, we obtain a canonical form by sliding strands, wrapping clockwise around the coupons, under the coupons. For example, in the top right case of Figure~\ref{fig:spin}, we slide strand $1$ under the $f_{e_1}$-coupon, and strands $a$ and $b$ under the $f_{e_2}$-coupons.

The details of the operation are depicted in Figure~\ref{fig:twist}, where the $V$-strand wraps clockwise around the $f$-coupon, and $f$ is a $\unit \to U \otimes V$ morphism. Sliding the $V$-strand under the coupon by tangle isotopy produces a positive twist $\theta_V$ and a positive crossing $c_{V,U}$.

Decomposing further in Figure~\ref{fig:twist}, let $U = U_i \otimes \ldots \otimes U_1$ be the tensor product of the colours of $i$ parallel strands, and $V = V_j \otimes \ldots \otimes V_1$ the tensor product of $j$ parallel strands wrapping clockwise around the $f$ coupon. As depicted in the figure, sliding the $j$ strands under $f$ induces 
\begin{enumerate}[-]
\item a twist $\theta_{V_\ell}$ on each of the $V_\ell$-coloured strands, $1 \leq \ell \leq j$,
\item a sequence of $j(j-1)$ positive and negative crossings of type $c^\pm_{V_\ell,V_k}$, followed by
\item a sequence of $ij$ positive crossings of type $c_{V_\ell,U_k}$.
\end{enumerate}

We obtain the morphisms $g_1, g_2$ of the canonical form (Figure~\ref{fig:proof2}) by factorising the morphisms $f_{e_1}$ and $f_{e_2}$ with these sequences of twists and crossings, after the sliding operation.

\begin{figure}[t]
\centering
\includegraphics[width=14cm]{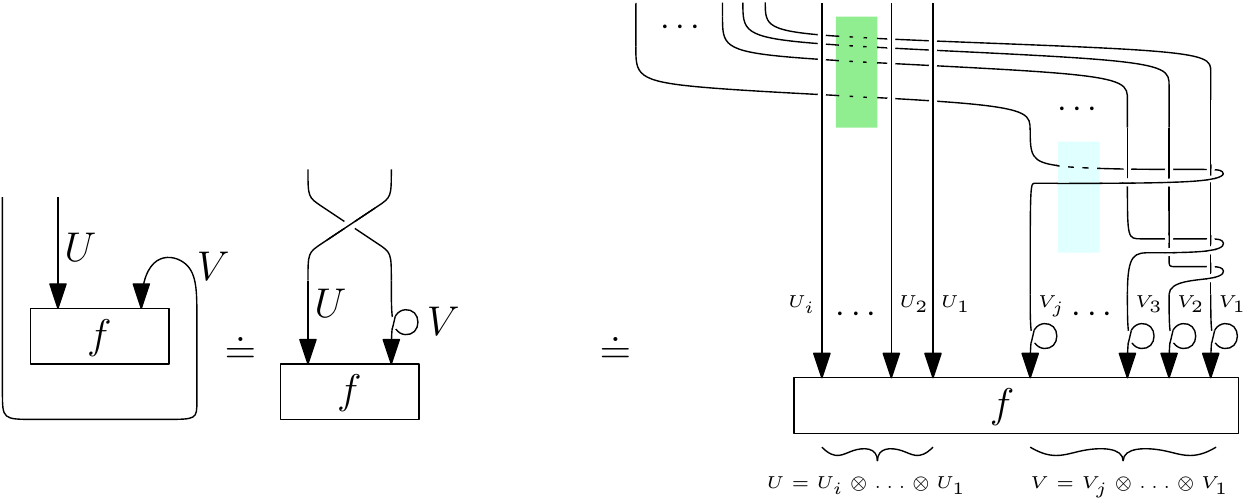}
\caption{Sliding of a $V = V_j \otimes \ldots \otimes V_1$-coloured strand under an $f$-coupon by underlying knot isotopy. The operation composes $f$ with a consecutive sequence of $j$ twists $\theta_{V_\ell}$, of $j(j-1)$ crossings $c^{\pm}_{V_i,V_j}$, and $ij$ crossings $c^{\pm}_{U_k,V_\ell}$.}
\label{fig:twist}
\end{figure}


\begin{figure}[tp]
\centering
\includegraphics[width=14cm]{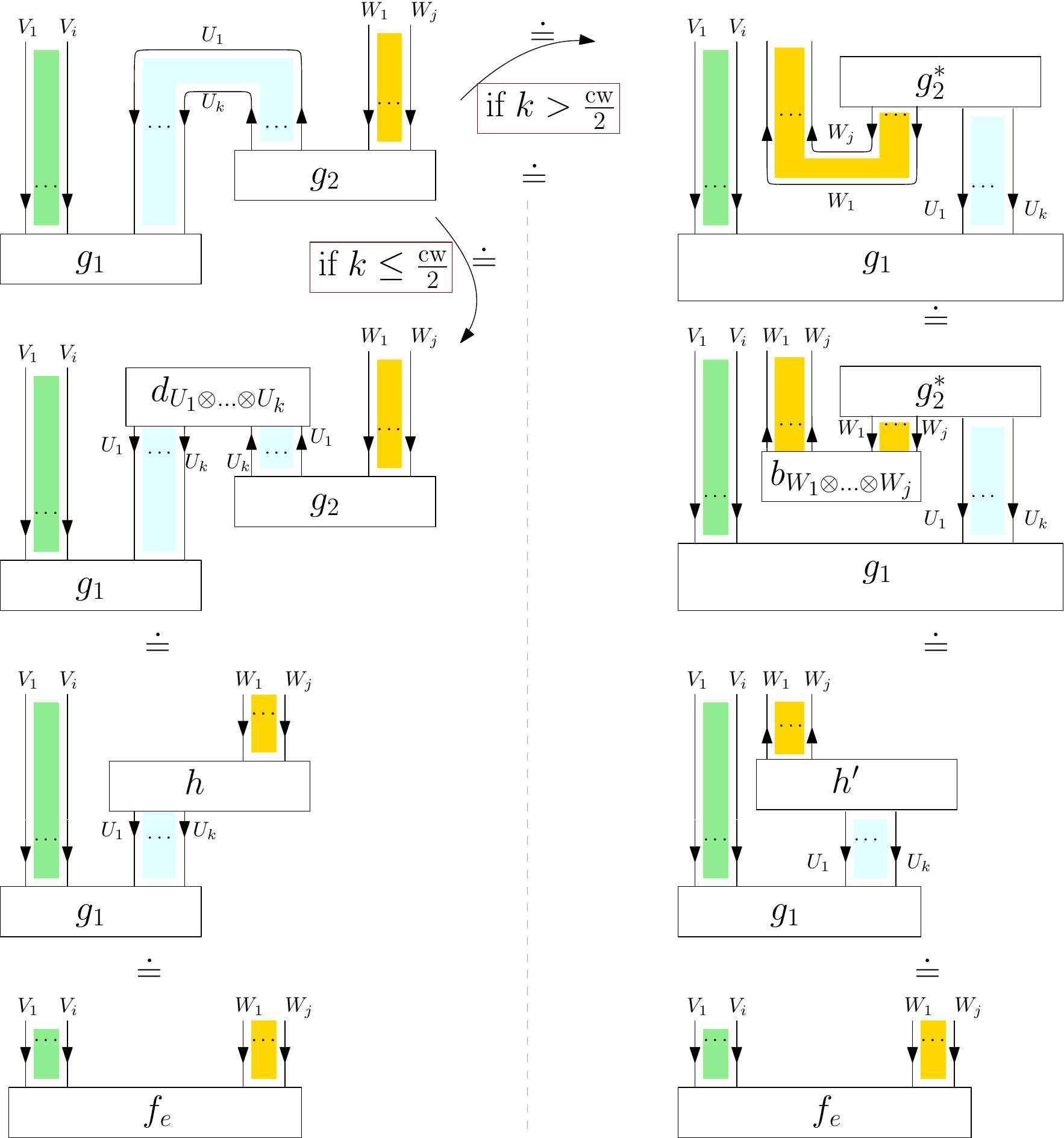}
\caption{Merging of two coupons in a canonical form (top left) along $k$ strands coloured $U_1, \ldots, U_k$. The factorisation scheme differs whether $k \leq \cw/2$ (left column) or $k > \cw / 2$ (right column). The top right equivalence comes from the equality in Figure~\ref{fig:equality2}.}
\label{fig:proof2}
\end{figure}

\subsection{Factorisation of the canonical form.}
Figure~\ref{fig:proof2} pictures two factorisation schemes for side-by-side morphisms $g_1$ and $g_2$ in canonical form, bridged by $k$ parallel strands coloured $U_1, \ldots, U_k$. Denote by $\cw$ the carving-width of the link diagram, and assume the tree embedding $(\Tree,\phi)$ has width $\cw$. We distinguish two cases:


\parag{Small bridge.} For $k$ smaller than half the carving-width (Figure~\ref{fig:proof2}, Left), we consider first the morphism $d_{U_1 \otimes \ldots \otimes U_k}$ induced by the composition of the evaluation morphisms $d_{U_\ell}$, $\ell = k \ldots 1$. More precisely, the morphism $d_{U_{1} \otimes \ldots \otimes U_k} \colon U_1 \otimes \ldots \otimes U_k \otimes U_k^* \otimes \ldots \otimes U_1^* \to \unit$, is obtained by composing the evaluation morphisms from bottom up:
\begin{equation}\label{eq:evaluations}
\begin{array}{ccl}
d_{U_{1} \otimes \ldots \otimes U_k} &\colon& U_1 \otimes \ldots \otimes U_k \otimes U^*_k \otimes \ldots \otimes U^*_1 \to \unit, \\
                                       &=& 
  \prod_{\ell=k}^1 \left( \id_{U_{1} \otimes \ldots \otimes U_{\ell-1}} \otimes d_{U^*_\ell} \otimes \id_{U^*_{\ell-1} \otimes \ldots \otimes U^*_1} \right) \\
  \end{array}
\end{equation}
where $\ell=k$ is the rightmost term of the composition.

The (partial) composition of $d_{U_{1} \otimes \ldots \otimes U_k}$ with $g_2$ through $U^*_k \otimes \ldots \otimes U^*_1$ gives the morphism $h$:
\begin{equation}\label{eq:h}
\begin{array}{ccl}
h &\colon& U_1 \otimes \ldots \otimes U_k \to W_1 \otimes \ldots \otimes W_j,\\
%
&=& \left(d_{U_1 \otimes \ldots \otimes U_k} \otimes \id_{W_1 \otimes \ldots \otimes W_j} \right) \circ \left( \id_{U_1 \otimes \ldots \otimes U_k} \otimes g_2 \right).\\
\end{array}
\end{equation}
Finally, the morphism $f_e$ obtained from the merging of $f_{e_1}$ and $f_{e_2}$ is given by the (partial) composition of $g_1$ and $h$, through $U_1 \otimes \ldots \otimes U_k$. Precisely,
\begin{equation}\label{fig:fe}
\begin{array}{ccl}
f_e &\colon& \unit \to V_1 \otimes \ldots \otimes V_i \otimes W_1 \otimes \ldots \otimes W_j,\\
&=& \left( \id_{V_1 \otimes \ldots \otimes V_i} \otimes h \right) \circ g_1.\\
\end{array}
\end{equation}

By construction, these operations give the morphism $f_e$ induced by the Penrose functor on the coloured tangle associated to the subtree of $\Tree$ rooted at the child node of edge $e$.


\begin{figure}
\centering
  \centering
  \includegraphics[width=12cm]{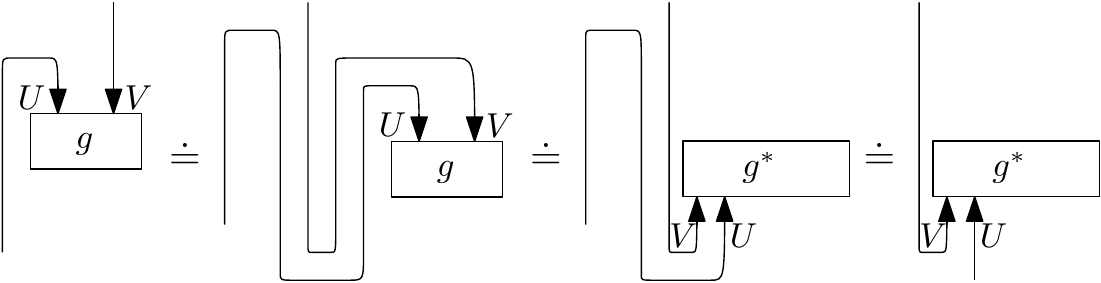}
  \captionof{figure}{Planar isotopy, then factorisation with $g^*$, the dual morphism to $g$.}
  \label{fig:equality2}
\end{figure}

\parag{Large bridge.} The case $k$ strictly larger than half the carving-width starts by flipping upside-down coupon $g_2$. Precisely, this operation is depicted in Figure~\ref{fig:equality2}. Starting with a morphism $g$, it consists of a planar isotopy to produce $g^*$, the dual morphism to $g$, then another planar isotopy. In the case where the category $\Cat$ satisfies the hypothesis of Theorem~\ref{thm:invsphere}, Figure~\ref{fig:equality2}, depicting an isotopy, proves the equality:
\[
	\left( d_U \otimes \id_V \right) \circ \left( \id_{U^*} \otimes g \right) = 
	\left( \id_V \otimes g^* \right)\circ \left( b_V \otimes \id_{U^*} \right)
\]

Applied to the canonical form on $g_1$ and $g_2$ (Figure~\ref{fig:proof2}, Top) the operation gives the composition of morphisms, involving $g_1$ and $g^*_2$, in the top right corner of Figure~\ref{fig:proof2}.

The following compositions are similar to the case of a small bridge. Morphism $b_{W_1 \otimes \ldots \otimes W_j}$ describes the composition of the co-evaluation morphisms for $W_1, \ldots, W_j$, i.e.,
\begin{equation}\label{eq:coevaluations}
\begin{array}{ccl}
b_{W_1 \otimes \ldots \otimes W_j} &\colon& \unit \to W_1^* \otimes \ldots \otimes W_j^* \otimes W_j \otimes \ldots \otimes W_1, \\
  &=& 
  \prod_{\ell=1}^j \left( \id_{W_{1}^* \otimes \ldots \otimes W_{\ell-1}^*} \otimes b_{V_\ell} \otimes \id_{W_{\ell-1} \otimes \ldots \otimes W_1} \right).\\
\end{array}
\end{equation}
where $\ell = 1$ is the rightmost term of the composition.

The morphism $h'$ is obtained by (partial) composition of $b_{W_1 \otimes \ldots \otimes W_j}$ and $g^*_2$:
\begin{equation}\label{eq:hprime}
h' =  \left( \id_{W_1^* \otimes \ldots \otimes W_j^*} \otimes g_2^* \right) \circ \left( b_{W_1 \otimes \ldots \otimes W_j} \otimes \id_{U_1 \otimes \ldots \otimes U_k} \right),
\end{equation}
and $f_e$ is obtained by (partial) composition of $g_1$ and $h'$:
\begin{equation}\label{eq:febis}
f_e = \left( \id_{V_1 \otimes \ldots \otimes V_i} \otimes h' \right) \circ g_1.
\end{equation}


\parag{Correctness.} The correctness of the algorithm follows directly from Theorem~\ref{thm:invsphere}, noting that the algorithm consists of an isotopy of the link, realisable by isotopies of the sphere on which the diagram is drawn, and Reidemeister moves.


\section{Algebraic implementation and complexity} 
\label{sec:algebraic}

For the implementation of the algorithm, we assume that the objects in the category $\Cat$ are finite dimensional free $\Ring$-modules, for a commutative ring with unity $\Ring$. Denote the dimension of every link component colour $V_i$ by $N_i := \dim V_i$, and let $N := \max_i \{ \dim V_i \}$. Fixing a basis for every $V_i$, all morphisms in $\Cat$---in particular the distinguished braiding, evaluation and co-evaluation, and twist morphisms---are represented by matrices with $\Ring$ coefficients.

This model is general, and contains in particular all quantum invariants derived from quantum groups.

\subsection{Elementary compositions.}

We consider the seven elementary compositions of morphisms depicted in Figure~\ref{fig:elementary}. They respectively represent the composition with $(1)$ a single braiding, $(2)$ a single twist, $(3)$ a single co-evaluation, $(4)$ a single evaluation. Cases $(5)$, $(6)$, and $(7)$ represent three types of partial compositions of the morphisms $f$ and $g$. We describe algorithms to perform these compositions on matrices.

\begin{lemma}\label{lem:elementary1}
Consider the elementary morphism compositions in Figure~\ref{fig:elementary}~(1),~(2),~(3), and~(4). Let $U,V,V',W$ be finite dimensional free $\Ring$-modules, with $\dim U = a$, $\dim V = b$, $\dim V' = b'$, and $\dim W = c$. Then, given the matrices for morphisms $f$, $\theta^{\pm}_V$, $c^{\pm}_{V,V'}$, $b_V$, and $d_U$, we can compute the matrix for morphism $h$ in: 
\begin{itemize}
\item $O(a(bb')^2c)$ arithmetic operations in $\Ring$ for (1), 
\item $O(ab^2c)$ arithmetic operations for (2) and (3), and 
\item $O(a^2b)$ arithmetic operations for (4).
\end{itemize}
The memory complexity of the operation does not exceed the size of the output, which is a row or column vector $h$ containing scalars from $\Ring$.
\end{lemma}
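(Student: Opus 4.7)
The plan is to treat each of the four elementary compositions as a tensor contraction and count the arithmetic operations and memory needed to carry it out, case by case. Since every morphism in the construction has domain $\unit$ and codomain a tensor product of the $V_i$'s, its matrix representation is a single column vector indexed by tuples of basis indices, one per outgoing strand. So I will use multi-index notation and write $f[\mathbf{i}]$ for the entry of $f$ indexed by the tuple $\mathbf{i}$, and similarly for $h$.

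First, for case (2), I write $f$ with three index groups corresponding to the factorisation of its codomain as $X \otimes V \otimes Y$, with $\dim X = a$, $\dim V = b$, $\dim Y = c$. Then each entry of $h$ is
\[
  h[\mathbf{i},j',\mathbf{k}] \;=\; \sum_{j=1}^{b} \theta_V[j',j]\, f[\mathbf{i},j,\mathbf{k}],
\]
so for each of the $ac$ slices $(\mathbf{i},\mathbf{k})$ one performs a $b \times b$ matrix-vector product, costing $O(b^2)$ operations, for a total of $O(ab^2 c)$. Case (1) is identical in spirit with the pair $V \otimes V'$ playing the role of $V$: one applies the $bb' \times bb'$ matrix $c^{\pm}_{V,V'}$ to each of the $ac$ slices, for a total of $O(a(bb')^2 c)$ arithmetic operations.

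For case (3), the composition with $b_V$ just tensors $f$ with the constant vector representing $b_V$, so
\[
  h[\mathbf{i}, j, j', \mathbf{k}] \;=\; f[\mathbf{i},\mathbf{k}] \cdot b_V[j,j'],
\]
which requires exactly one multiplication per output entry, giving $O(ab^2 c)$. For case (4), $f$ now has codomain of the form $U^* \otimes U \otimes W$ (or $W \otimes U^* \otimes U$, depending on the picture) of total size $a^2 b$, and the composition contracts the two $U$-indices against $d_U$:
\[
  h[\mathbf{k}] \;=\; \sum_{i,j=1}^{a} d_U[i,j]\, f[i,j,\mathbf{k}],
\]
so one pass through the $a^2 b$ entries of $f$ suffices, yielding $O(a^2 b)$ arithmetic operations.

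In each case the output vector $h$ can be produced directly (in cases (1) and (2) slice-by-slice, overwriting a buffer of size at most that of $h$; in cases (3) and (4) by a single scan), so the working memory never exceeds the size of $h$ plus constantly many scalars. No step I expect is a real obstacle: the only mild subtlety is bookkeeping the ordering of tensor factors so that the slicing in (1) and (2) is correct, and recognising that in case (4) one must account for a dense $d_U$, which is what gives the $a^2$ factor rather than $a$.
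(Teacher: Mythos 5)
Your proof is correct and follows essentially the same route as the paper's: both compute each composition entry-by-entry as an explicit tensor contraction in multi-index notation, exploiting the block structure of $\id_U \otimes M \otimes \id_W$ so that each output entry costs $O(m')$ operations for an $m\times m'$ middle matrix $M$, which yields exactly the stated bounds $O(a(bb')^2c)$, $O(ab^2c)$, and $O(a^2b)$. The only divergence is in case (4), where you read the diagram as contracting two dense $U$-indices of a large input $f$ against $d_U$, whereas the paper's explicit index formula has the large vector as the \emph{output} with a single product per entry and no summation; either reading gives the claimed $O(a^2b)$ operation count and memory bound.
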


\begin{proof}
\noindent
{\bf Figure~\ref{fig:elementary}~(1),~(2), and~(3).} All three cases consist of the matrix-vector product $h = (\id_U \otimes M \otimes \id_W) \cdot f$, where $M$ is respectively the $(bb'\times bb')$-matrix $c^{\pm}_{V,V'}$, the $(b\times b)$-matrix $\theta^{\pm}_V$, and the $(b^2 \times 1)$-matrix $b_V$. 

Consider $M$ to be an $(m \times m')$-matrix, with coefficients $(M_{i,j})_{1 \leq i \leq m, 1 \leq j \leq m'}$. Matrix $(\id_U \otimes M \otimes \id_W)$ has at most $m'$ non-zero coefficients per row. We get the formula for the $i^{\text{th}}$ entry of $h$:
\[
	h_{i,1} = \sum_{k=1 \ldots m'} M_{\beta+1,k} \cdot f_{\alpha cm' + \gamma + (k-1)c,1}
\]
where $i$ is uniquely written as $i=\alpha\cdot cm + \beta \cdot c + \gamma$, with $0 \leq \alpha \leq a-1$, $0 \leq \beta \leq m-1$, and $1 \leq \gamma \leq c$. 
Computing $h$ requires $O(m' |f|)$ arithmetic operation in $\Ring$, where $|f|$ is the length of vector $f$, storing $O(|f|)$ scalars from $\Ring$.

\medskip

\noindent
{\bf Figure~\ref{fig:elementary}~(4).} With a similar approach, we get for any $j$, $1 \leq j \leq a^2b$:
\[
	h_{1,j} = (d_U)_{1,\alpha a + \gamma} \cdot f_{\beta+1,1}
\]
where $j$ is uniquely written as $j=\alpha \cdot ab + \beta \cdot a + \gamma$, with $0 \leq \alpha \leq a-1$, $0 \leq \beta \leq b-1$, and $1 \leq \gamma \leq a$. 
The algorithm has complexity $O(a^2b)$ and memory usage $O(a^2b)$.

\end{proof}

\begin{figure}[t]
\centering
\includegraphics[width=15cm]{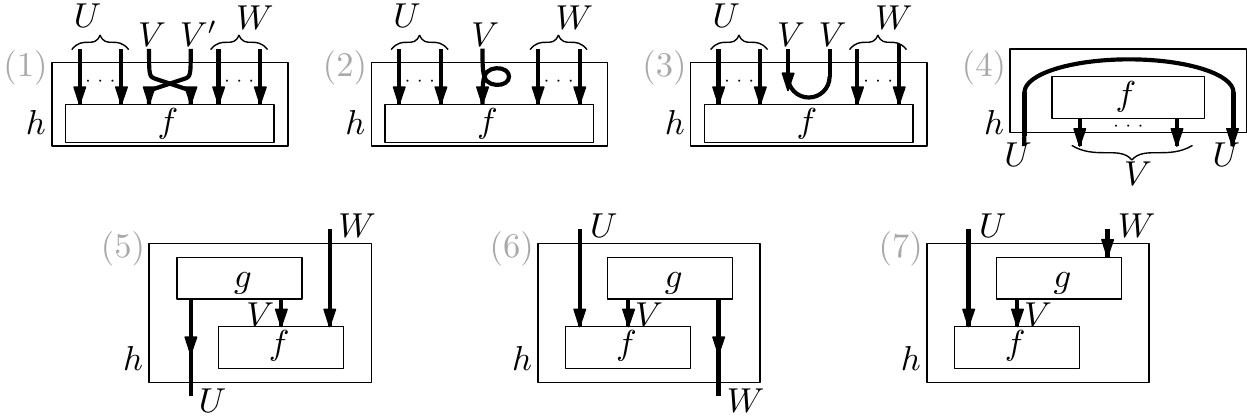}
\caption{Graphical representation of the seven elementary compositions of morphisms.}
\label{fig:elementary}
\end{figure}

\begin{lemma}\label{lem:elementary2}
Consider the elementary morphism compositions in Figure~\ref{fig:elementary}~(5),~(6), and~(7). Let $U,V,W$ be finite dimensional free $\Ring$-modules, with $\dim U = a$, $\dim V = b$, and $\dim W = c$. Then, given the matrices for morphisms $f$ and $g$, we can compute the matrix for morphism $h$ in $O(abc)$ arithmetic operations in $\Ring$, and memory complexity $O(ab+bc+ac)$ times the size of a scalar in $\Ring$.
\end{lemma}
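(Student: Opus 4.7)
The plan is to mirror the approach of the proof of Lemma~\ref{lem:elementary1}: for each of cases (5), (6), (7) in Figure~\ref{fig:elementary}, I would read off from the diagram the precise tensor-product composition expressing $h$ in terms of $f$ and $g$, then write out explicit summation formulas for the entries of $h$ with respect to fixed bases of $U$, $V$, $W$, in which all morphisms are represented by matrices over $\Ring$.

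In each of the three cases, geometrically $h$ is obtained by composing $f$ and $g$ along a bundle of strands whose combined colour has dimension $b = \dim V$, while the free strands on the $f$-side contribute dimension $a = \dim U$ and the free strands on the $g$-side contribute dimension $c = \dim W$. Consequently $f$, $g$, and $h$ are stored as matrices with $O(ab)$, $O(bc)$, and $O(ac)$ scalar entries respectively, which immediately accounts for the claimed memory bound of $O(ab + bc + ac)$.

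For the arithmetic count, in each of the three cases the composition is, up to a reordering of tensor factors, of the form $(\id \otimes g \otimes \id) \circ (\id \otimes f \otimes \id)$, contracted along the $V$-factor. Writing out the entries explicitly yields a formula of the shape
\[
  h_{(i,k)} \;=\; \sum_{j=1}^{b} f_{(i,j)} \, g_{(j,k)},
\]
with $1 \le i \le a$ and $1 \le k \le c$, where the decoding of each multi-index into a scalar position in $f$ or $g$ depends on which of the three cases is being considered, but does not affect the operation count. Computing all $ac$ entries of $h$ in this way takes $abc$ multiplications and $ac(b-1)$ additions in $\Ring$, hence $O(abc)$ arithmetic operations overall.

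The only real obstacle, and it is a mild one, is the index bookkeeping: one must check, case by case, that the block-diagonal structure of the relevant $\id \otimes (\cdot) \otimes \id$ matrices reduces what looks a priori like a product of matrices of total dimension up to $abc$ in each factor down to the compact matrix-product shape above, so that no spurious factor of $a$, $b$, or $c$ slips into the cost. Once this indexing is set up carefully in each of the three diagrams, the claimed arithmetic and memory complexities follow directly, exactly as in Lemma~\ref{lem:elementary1}.
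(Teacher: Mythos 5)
Your proposal is correct and follows essentially the same route as the paper: in each of cases (5)--(7) the paper writes $h$ as a partial composition such as $h = (g \otimes \id_W)(\id_U \otimes f)$, derives an explicit index formula in which each of the $O(ac)$ entries of $h$ is a sum of $b$ products (i.e.\ exactly your reshaped matrix product $h_{(i,k)} = \sum_j f_{(i,j)} g_{(j,k)}$ with case-dependent index decoding), and reads off the $O(abc)$ operation count and the $O(ab+bc+ac)$ memory bound from the sizes of $f$, $g$, and $h$.
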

 
\begin{proof}
\noindent
{\bf Figure~\ref{fig:elementary}~(5).} Morphism $f$ is a $bc \times 1$ matrix, and morphism $g$ is a $1 \times ab$ matrix. Define $h = (g \otimes \id_W)(\id_U \otimes f)$. Morphism $h$ is a $c \times a$ matrix.

Studying the shape of matrices $(\id_U \otimes f)$ and $(g \otimes \id_W)$, it appears that every one of the $c \times a$ coefficients of the product $h = (g \otimes \id_W)(\id_U \otimes f)$ is a sum of $O(b)$ terms. Precisely, an explicit computation gives us, for any $i,j$, $1 \leq i \leq c$, $1 \leq j \leq a$:
\[
	h_{i,j} = \sum_{k=0 \ldots b-1} g_{1,(j-1)b+k+1} \cdot f_{kc+i,1}
\]

Morphism $h$ is a $c \times a$ matrix, and each of its coefficients can be computed in $O(b)$ arithmetic operations in $\Ring$, leading to the $O(abc)$ time complexity. The memory consumption is the sum of the sizes of the input matrices $f$ and $g$, and the output matrix $h$.

\medskip

{\bf Figure~\ref{fig:elementary}~(6).} With a similar approach, for any $i,j$, $1 \leq i \leq a$, $1 \leq j \leq c$:
\[
	h_{i,j} = \sum_{k=0 \ldots b-1} g_{1,kc+j} \cdot f_{(i-1)b+k+1,1}
\]

\medskip

\noindent
{\bf Figure~\ref{fig:elementary}~(7).} With a similar approach, for any $i$, $1 \leq i \leq ac$, write $i = \alpha c + \beta$, for $0 \leq \alpha \leq a-1$ and $1 \leq \beta \leq c$:
\[
	h_{i,1} = \displaystyle \sum_{k=1 \ldots b} g_{\beta,k} f_{\alpha b + k,1}
\]
\end{proof}


\subsection{Implementation of the algorithm.} 

We implement the algorithm described in Sections~\ref{sec:graphicalalgo} and~\ref{sec:mainop} using the elementary composition of Figure~\ref{fig:elementary}. 
Define $N$ a bound on the dimension of the different modules $U_i$, $V_j$, $W_k$ colouring the components of the link.

\parag{Leaf morphisms.} The leaf morphisms described in Equations~(\ref{eq:atom1}-\ref{eq:atom1}) and Figure~\ref{fig:atom} are implemented using elementary compositions~(1) and~(2). By Lemma~\ref{lem:elementary1}, the complexity is at most $O(N^6)$ arithmetic operations in $\Ring$.

\parag{Sliding under a coupon.} The sliding operation as presented in Figure~\ref{fig:twist} composes a morphism $f$ with a sequence of twist and braiding morphisms.  
Precisely, let $h$ denote the entire morphism in Figure~\ref{fig:twist}. Starting from the $(O(N^{i+j}) \times 1)$ matrix $f$, it is computed iteratively applying $j$ times elementary composition~(2) for the twists, then $j(j-1)$ times elementary composition~(1) for the braidings between $V_i$- and $V_j$-strands, and finally $ij$ times elementary composition~(1) for the braidings between $V_i$- and $U_j$-strands.

During the computation, we maintain a vector of size $(1 \times O(N^{i+j}))$. Applying Lemma~\ref{lem:elementary1}, the sliding operation runs in $O(j(i+j) N^{i+j+2})$ arithmetic operations in $\Ring$, storing $O(N^{i+j})$ scalar from $\Ring$. 
In the algorithm, $i+j \leq \cw$, the carving-width of the link diagram. Consequently, we get $O(\cw^2 N^{\cw +2})$ operations, with memory $O(N^{\cw})$.


\parag{Construction of evaluations and co-evaluations.} The morphism $d_{U_1 \otimes \ldots \otimes U_k}$ appearing in Figure~\ref{fig:proof2} is the result of $k$  elementary compositions of type~(4). The morphisms maintained during the computation are of size $(1 \times O(N^{2 k}))$. Applying Lemma~\ref{lem:elementary1}, the computation takes a total of $O(k N^{2k})$ arithmetic operations in $\Ring$, storing $O(N^{2k})$ scalars from $\Ring$. 
The case $b_{W_1 \otimes \ldots \otimes W_j}$ is similar.

In the algorithm, $k$ (or $j$) is smaller than $\cw/2$. Consequently, the complexity is $O(\cw N^{\cw})$ arithmetic operations, storing $O(N^{\cw})$ scalars.

\parag{Composition of morphisms.} Finally, the compositions of morphisms described in Figure~\ref{fig:proof2} are implemented with a constant number of elementary compositions~(5),~(6), and~(7). Considering Lemma~\ref{lem:elementary2}, the product $abc$ of dimensions never exceed $N^{\frac{3}{2} \cw}$. Consequently, the compositions of Figure~\ref{fig:proof2} are implemented using $O(N^{\frac{3}{2} \cw})$ arithmetic operations in $\Ring$, storing $O(N^{\cw})$ scalars from $\Ring$.

\parag{Overall complexity.} In conclusion, we sum up the different steps of the algorithm and its implementation. Let $\Diag$ be a coloured link diagram with $n$ crossings and carving width $\cw$, where the dimension of each colouring module is at most $N$. The algorithm first computes an optimal tree embedding in $O(\poly(n))$ operations. The tree has size $n$ and width $\cw$. W.l.o.g., we assume the diagram has at least one crossing that is not a twist, and consequently $\cw \geq 4$. The quantum invariant associated to the colouring is computed in:
\[
	O(n N^{\frac{3}{2}\cw}) \ \text{arithmetic operations in} \ \Ring, 
\]
storing:
\[
   O(n) \ \text{words for the diagram, plus} \ O(N^{\cw}) \ \text{scalars from} \ \Ring. 
\]


\section{Arithmetic complexity and quantum invariants of links}
\label{sec:arithm}

Working with matrices with $\Ring$-coefficients, for a ring $\Ring$, allows the algorithm to be applied in great generality. For example, any complex simple Lie algebra $\gLie$ produces quantum invariants of links, that can be expressed as a composition of morphisms between free $\Ring$-modules, and to which our algorithm can be applied. See~\cite[Chapter 6]{turaev10-book} for an explicit construction. 

In this case, $\Ring$ is a polynomial ring, and both degrees of polynomials as well as values of coefficients may blow-up during intermediate computation. Specifically, implemented naively, both arithmetic operations within $\Ring$ and bit size of $\Ring$-elements may become {\em exponential in $n$}. 

In this section we describe a solution to control the arithmetic complexity in the case $\Ring = \ZZ[q]$, which is sufficient for all $J_{L}^{\gLie}$ invariants, up to normalisation. We also provide detailed complexity bounds for completeness.


\subsection{Arithmetic complexity of polynomial invariants.}

We give coarse, but general, bounds on the degrees and coefficients of a polynomial invariant produced by the algorithm introduced above, that are sufficient for the complexity analysis. 

\begin{proposition}
Let $\Cat$ be a strict ribbon category of $\ZZ[q]$-modules, and let $\Diag(L)$ be an $n$-crossings diagram of a link $L$ whose components are coloured with free modules $V_1, \ldots, V_m \in \Cat$, of dimension at most $N$.

Let $d_0$ and $C_0$ be respectively a bound on the degree and a bound on the absolute value of coefficients of all polynomials in the matrices $c^{\pm}_{V_i,V_j}$, $\theta^{\pm}_{V_i}$, $d_{U_i}$, $b_{U_i}$, for $1 \leq i,j \leq m$.

Then the polynomial invariant $J_L^{\Cat}(V_1, \ldots, V_m) \in \ZZ[q]$ has degree and absolute value of coefficients bounded by $d_n$ and $C_n$ respectively, with:
\[
	d_n = O(nd_0) \ \ \ \text{and} \ \ \ C_n = 2^{O(n \sqrt{n} \log N + n \log C_0)}.
\]
\end{proposition}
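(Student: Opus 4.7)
The plan is to prove both bounds by induction on the bond tree embedding $\Tree$ used in the algorithm of Section~\ref{sec:graphicalalgo}, tracking upper bounds on the degree $D_e$ and on the maximal absolute value of coefficients $C_e$ of the polynomial matrix $f_e$ produced at each edge $e$ of $\Tree$. At the root half-edge, $f_e$ equals the scalar invariant $J_L^\Cat(V_1, \ldots, V_m)$, so the claim reduces to bounding $D_e$ and $C_e$ at the root.

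For the degree, I would observe that $f_e$ is the Penrose-functor image of the subtangle of $\Diag(L)$ inside $\lambda_e$, which is a composition of $O(n_e)$ atomic matrices, where $n_e$ is the number of crossings in that subtangle. Each atomic matrix has polynomial entries of degree $\leq d_0$, and every elementary composition from Figure~\ref{fig:elementary} merely adds degrees, so $D_e \leq O(n_e d_0)$. At the root $n_e = n$, which yields $d_n = O(n d_0)$. Topological invariance (Theorem~\ref{thm:invsphere}) guarantees this bound does not depend on the specific, possibly slide-inflated, representation used by the algorithm.

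For the coefficient bound, at each internal node the merge step of Section~\ref{sec:mainop} expresses every entry of $f_e$ as a sum of at most $N^{O(\cw)}$ terms, each a product of one entry of $f_{e_1}$, one entry of $f_{e_2}$, and a bounded number of atomic matrix entries. Using the convolution bound for polynomial multiplication---the product of polynomials of degree $\leq D$ has coefficient magnitude bounded by $(D+1)$ times the product of the factors' coefficient magnitudes---together with $D_e = O(n d_0)$, this yields the recursion
\[
\log C_e \;\leq\; \log C_{e_1} + \log C_{e_2} + O\bigl(\cw \log N + \log(n d_0 C_0)\bigr).
\]
Telescoping over the $O(n)$ internal tree nodes, with leaf contributions of order $\log C_0$ and substituting $\cw = O(\sqrt n)$ for the planar diagram, gives
\[
\log C_n \;\leq\; O(n \log C_0) + O(n) \cdot O\bigl(\cw \log N + \log(n d_0 C_0)\bigr) = O\bigl(n\sqrt n \log N + n \log C_0\bigr),
\]
as claimed.

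The hard part will be handling the slide and evaluation sub-steps inside each merge. Each internal tree node of $\Tree$ involves $O(\cw^2)$ auxiliary elementary compositions in addition to the big matrix product, so a naive accounting would charge $O(\cw^2)$ multiplicative growth per node, yielding a far looser $O(n^2)$ exponent. One must argue that these auxiliary compositions---each a product against an atomic matrix with $O(N)$ nonzeros per row and coefficients bounded by $C_0$---contribute only sub-leading terms that can be absorbed into the dominant $O(\cw \log N + \log(n d_0 C_0))$ per-node overhead of the main matrix product, so that the recursion indeed solves to the stated bound.
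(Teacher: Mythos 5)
Your route is a genuinely different one from the paper's, and it has a real gap in the coefficient bound. The paper does not induct over the bond tree embedding at all: it observes that $J_L^{\Cat}(V_1,\ldots,V_m)$ is independent of which decomposition is used to compute it, and then bounds degree and coefficients by analysing a \emph{different}, purely hypothetical computation --- a path-tree embedding of $\Diag(L)$ of congestion equal to the cut-width, which is $O(\sqrt n)$ by the planar separator theorem. That computation is just a product of $O(n)$ matrices, each an atomic matrix tensored with identities and of size at most $N^{O(\sqrt n)}\times N^{O(\sqrt n)}$; each factor adds $d_0$ to the degree and multiplies the coefficient magnitude by at most $N^{O(\sqrt n)}C_0$ (times a degree factor), which gives both bounds in two lines. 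Your degree argument is essentially a version of this and is fine: each $f_e$ is the Penrose image of the subtangle, so you may bound its degree via any presentation of that subtangle with $O(n_e)$ atomic pieces, ignoring the slide-inflated presentation the algorithm actually manipulates.

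The coefficient bound is where the proposal does not close. The recursion $\log C_e \le \log C_{e_1}+\log C_{e_2}+O(\cw\log N+\log(nd_0C_0))$ is asserted, not derived: the actual merge at a node first composes $f_{e_1}$ and $f_{e_2}$ with $\Theta(\cw^2)$ braidings and twists coming from the sliding step (Figure~\ref{fig:twist}), and each such elementary composition genuinely multiplies the coefficient magnitude by a factor of order $N^{2}(d_0+1)C_0$. So the honest per-node overhead is $O(\cw^2\log(Nd_0C_0))$, and over $O(n)$ nodes this gives $2^{O(n\cw^2\log(Nd_0C_0))}=2^{O(n^2\log(Nd_0C_0))}$, which misses the target by a factor of $\sqrt n$ in the $\log N$ term and by a factor of $n$ in the $\log C_0$ term. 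Your closing paragraph acknowledges exactly this but only says ``one must argue'' the auxiliary compositions are sub-leading; I do not see how they can be, since they are multiplicative contributions to $C_e$, not additive ones, and there really are $\Theta(n\cw^2)$ of them. Nor does replacing the slide-inflated presentation of each $f_e$ by a direct standard-form presentation of the subtangle save you, because a standard-form presentation may have horizontal slices crossing $\Theta(n_e)$ strands, so the ``sum of $N^{O(\cdot)}$ terms'' factor blows up. What rescues the bound is precisely a \emph{width-controlled} alternative computation, i.e.\ the cut-width-$O(\sqrt n)$ path decomposition of the paper's proof. To repair your write-up, either switch to that argument for the coefficient bound, or prove an amortised bound showing the total number of sliding compositions over the whole tree is $O(n\cw)$ rather than $O(n\cw^2)$ --- which is not established in the paper and which I doubt holds in general.
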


\begin{proof}
Consider a tree embedding of graph $\Diag(L)$ where the tree is a path, with leaves attached to it. The minimal congestion over all such embeddings is called the {\em cut-width} of the graph, and is $O(\sqrt{n})$ due to the planar separator theorem.

Let $k$ be the cut-width of $\Diag(L)$, and $(\Path,\phi)$ a minimal embedding of $\Diag(L)$ into a path-tree. 
Running the algorithm of Sections~\ref{sec:graphicalalgo}-\ref{sec:algebraic} on this path decomposition boils down to computing the product of $O(n)$ matrices:
\[
	M_{\alpha \cdot n} \cdot \ldots \cdot M_1
\]
where all matrices are tensor products of a $c^{\pm}_{V_i,V_j}$, $\theta^{\pm}_{V_i}$, $d_{U_i}$, $b_{U_i}$, for some $1 \leq i,j \leq m$, with identities, and all matrices have size at most $N^{O(k)} \times N^{O(k)}$. Additionally, $M_1$ has $1$ column, and $M_{\alpha \cdot n}$ has $1$ row, to give a scalar in $\ZZ[q]$.

Tensor with the identity does not change the bounds $d_0$ and $C_0$ on degrees and coefficients. Multiplying by such matrix adds at most $d_0$ to the degree, and multiplies by at most $N^{O(k)}C_0$ the largest coefficient. We get the global bounds by multiplying the matrices together, and substituting $O(\sqrt{n})$ for $k$.
\end{proof}

We give a general algorithm to compute a one-variable, integer coefficient, polynomial invariants, using standard computer algebra techniques and the algorithm of Sections~\ref{sec:graphicalalgo}-\ref{sec:algebraic}. 

\begin{proposition}\label{thm:arithmpoly}
Let $\Cat$ be a strict ribbon category of $\ZZ[q]$-modules for the one-variable polynomial ring $\ZZ[q]$. Let $L$ be an $m$-components link with colours the free modules $V_1, \ldots, V_m$, and let $J^{\Cat}_L(V_1, \ldots , V_m) \in \ZZ[q]$ be the associated topological invariant. Assume $L$ is presented by an $n$-crossings diagram $\Diag(L)$ with carving-width $\cw$.

Assume that the dimensions of the free modules $V_1, \ldots ,V_m$ are at most $N$, and that the polynomial $J^{\Cat}_{L}(V_1, \ldots , V_m)$ has degree bounded by $d_n$ and largest coefficient in absolute value bounded by $C_n$. Then $J^{\Cat}_{L}(V_1, \ldots , V_m)$ can be computed in: 
\[
\begin{array}{c}
\displaystyle O \left( d_n \left( d_n + \log C_n \right) \cdot \Arith\left(\log(d_n\log d_n + \log C_n)\right) \times n N^{\frac{3}{2}\cw} \right.\\
			\ \ \ \ \ \ \ \ \ \ \ \ \ \ \ \ \left. + d_n \left(d_n \log d_n + \log C_n\right)^2 + d_n^2 \Arith\left(d_n \log d_n + \log C_n\right) \right)\\
\end{array}
\]
machine operations, using: 
\[
\displaystyle O\left( \log\left(d_n \log d_n + \log C_n\right) N^{\cw} + n d_n(d_n \log d_n + \log C_n) + d_n^2 \Arith(d_n \log d_n + \log C_n) \right)
\]
bits. Here, $\Arith(l) \in \widetilde{O}(l)$ is the arithmetic complexity of operations $+,-,\times,\div$ on integers encoded on at most $l$ bits, which is linear in $l$ up to a poly-logarithmic factor. 
\end{proposition}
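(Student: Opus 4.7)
The strategy is evaluation--interpolation combined with Chinese remaindering, so that every invocation of the algorithm of Sections~\ref{sec:graphicalalgo}--\ref{sec:algebraic} runs over a small finite field, keeping intermediate data and per-operation cost independent of the polynomial blow-up. Since $J^{\Cat}_L(V_1,\ldots,V_m) \in \ZZ[q]$ has degree at most $d_n$, it is determined by its values at $d_n+1$ distinct integer points. I would pick $a_0,\ldots,a_{d_n}$ with $|a_i| = O(d_n)$, so that the triangle inequality gives $|J(a_i)| \leq (d_n+1)\,C_n\,(\max_i|a_i|)^{d_n}$, i.e.\ bit size $B := O(d_n \log d_n + \log C_n)$. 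Choose small primes $p_1,\ldots,p_s$ of bit size $b := O(\log B)$ with $\prod_j p_j > 2\max_i|J(a_i)|$; by the prime number theorem this is achieved with $s = O(B/b) = O(d_n + \log C_n)$ primes after absorbing $\log$-factors into $\Arith$.

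For each pair $(a_i,p_j)$, substitute $q \mapsto a_i$ in the finite set of distinguished matrices $c^{\pm}_{V_\alpha,V_\beta}, \theta^{\pm}_{V_\alpha}, d_{U_\alpha}, b_{U_\alpha}$ and reduce their entries modulo $p_j$, then run the algorithm of \refsec{algebraic} over $\mathbb{F}_{p_j}$. Correctness is immediate: the map $\ZZ[q] \to \mathbb{F}_{p_j}$ sending $q$ to $a_i \bmod p_j$ is a ring homomorphism, and the algorithm is expressed entirely in terms of ring operations on matrices, so the returned scalar equals $J(a_i) \bmod p_j$. Each such run performs $O(n N^{\frac{3}{2}\cw})$ field operations, each of cost $\Arith(b) = \Arith(\log B)$ machine operations; summing over the $(d_n+1)\cdot s$ pairs yields the first term of the claimed time bound.

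For reconstruction, standard incremental CRT recovers each integer $J(a_i)$ of bit size $B$ from its $s$ residues in $O(B^2)$ machine operations, and doing this $d_n + 1$ times yields the second term $O(d_n(d_n\log d_n + \log C_n)^2)$. Finally, Lagrange (or Newton) interpolation on $d_n + 1$ integer-valued data points reduces to $O(d_n)$ multiplications of integer polynomials of degree $\leq d_n$ with coefficients of bit size $O(B)$; classical multiplication handles this in $O(d_n^2 \Arith(B))$ machine operations, which is the third term. Since the target polynomial is known a priori to lie in $\ZZ[q]$, any rational denominators produced during Lagrange interpolation cancel exactly, so no extra GCD work is needed.

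Memory accounting mirrors the time accounting: at any moment only one $\mathbb{F}_{p_j}$-residue per scalar of the $O(N^{\cw})$-word working set of the main algorithm is live, contributing $O(b \cdot N^{\cw})$ bits; the table of residues kept across evaluation points together with input bookkeeping contributes the middle term; and the interpolation buffers account for the last term. The main obstacle is a careful choice of parameters, namely picking $b$ small enough that $\Arith(b)$ does not inflate the leading $nN^{\frac{3}{2}\cw}$ cost yet large enough that the number of primes remains $O(d_n + \log C_n)$, and checking that all $\log$-factor simplifications absorb correctly into the quasi-linear $\Arith$ notation; once that is done, the three terms of the claimed bound line up one-to-one with the three phases (main algorithm runs, CRT reconstruction, polynomial interpolation) described above.
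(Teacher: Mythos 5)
Your proposal is correct and follows essentially the same route as the paper's proof: evaluation of $J^{\Cat}_L$ at $d_n+1$ integer points by running the algorithm of Sections~\ref{sec:graphicalalgo}--\ref{sec:algebraic} modulo small primes, Chinese-remainder reconstruction of each integer value, and Lagrange interpolation, with the three phases accounting for the three terms of the time bound exactly as in the paper. The only cosmetic differences are your explicit ring-homomorphism justification of correctness and your slightly different (but equivalent) parameterization of the prime sizes and count.
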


\begin{proof}
The algorithm relies on evaluation and interpolation. For short, denote $J^{\Cat}_{L}(V_1, \ldots , V_m)$ by $P(q) \in \ZZ[q]$. 

\medskip

\noindent
{\em Evaluation.} We evaluate $P(q)$ on integer points $q \in \{0,1, \ldots, d_n\}$. Fix $q_0$ in this set, and substitute $q_0$ for $q$ in matrices $c^{\pm}_{V_i,V_j}$, $\theta^{\pm}_{V_i}$, $d_{V_i}$, and $b_{V_i}$. The algorithm of Sections~\ref{sec:graphicalalgo}-\ref{sec:algebraic} is consequently a succession of matrix multiplications, where all matrices have integer coefficients (up to some preprocessing normalisation), and the resulting $P(q_0)$ is an integer of absolute value less than 
\[
	C d_n^{d_n+1} \leq 2^{(d_n+1)\log_2 d_n + \log_2 C_n} = 2^{O(d_n \log d_n + \log C_n)}
\]

For a fixed $q_0$, we perform computation modulo the first $r$ prime numbers $2=p_1, \ldots ,p_r$ successively, such that the product $p_1 \cdots p_r$ is larger than $|P(q_0)|$. We then reconstruct $P(q_0)$ using the Chinese Remainder Theorem. The product $p_1 \cdots p_r$ is of order $2^{r \log r}$~\cite{RS62}. We take an appropriate $r$ such that $r \log r \in \Theta(d_n \log d_n + \log C_n)$, which gives $r \in O(d_n + \log C_n)$.

Reconstructing the value $P(q_0)$ from all the $(P(q_0) \mod p_i)$, $1 \leq i \leq r$, can be computed in $O(r^2 \log^2 r) = O((d_n\log d_n + \log C_n)^2)$ machine operations~\cite[Theorem 5.8]{Gathen:2003:MCA:945759}.  

Additionally, the values of all primes $p_i$, $i \leq r$, are in $O(r \log (r \log r)) = O(r \log r) = O(d_n \log d_n + \log C_n)$~\cite{RS62}. 

Denote by $\Arith(l)$ the computational complexity of performing arithmetic operations $+, -, \times$ on integers encoded on at most $l$ bits, in $\ZZ/w\ZZ$, for an integer $w \leq 2^l$. The best known estimate for $C(l)$ is: 
\[
	C(l) = O(l \log^2 (l) \  2^{O(\log^* l)}) = \widetilde{O}(l),
\] 
where $\log^*$ denotes the iterated logarithm. This describes the complexity of performing the extended Euclidean algorithm~\cite{Gathen:2003:MCA:945759} using F\"urer's method~\cite{DBLP:journals/siamcomp/Furer09}. 

\medskip

\noindent
{\em Interpolation.} We reconstruct polynomial $P(q) \in \ZZ[q]$ of degree bounded by $d_n$ using Lagrange interpolation. Lagrange interpolation gives directly a formula for $P(q)$, computable in $O(d_n^2 \Arith(d_n \log d_n + \log C_n))$ machine operations~\cite[Theorem 5.1]{Gathen:2003:MCA:945759}.

\medskip

Summing up the complexity of evaluating polynomial $P(q)$ on the first $d_n+1$ non-negative integers using the modulo reconstruction approach and running the algorithm of Sections~\ref{sec:graphicalalgo}-\ref{sec:algebraic}, and the complexity of evaluating the interpolation formula, gives the complexity of the proposition.
\end{proof}

We conclude by proving the main Theorem:

\begin{proof}{ [of main Theorem~\ref{thm:maing}] }
Fixing the category $\Cat$ and the colours $V_1, \ldots, V_m$, of dimension at most $N$, makes $N$ constant, as well as the quantities $d_0$ and $C_0$ bounding degrees and coefficients of polynomials in the matrix for braidings, twists, and (co)evaluations. It enforces $d_n = O(n)$ (the bound on degree of the output polynomial), and $C_n = 2^{O(n \sqrt{n})}$ (the bound on absolute value of coefficients of the output invariant) in the complexity analysis. Substituting values gives the result of Theorem~\ref{thm:maing}.
\end{proof}

Note that we get the following parameterized complexity result for the more general problem of quantum invariant computation, where the invariant is part of the input:

\begin{theorem}\label{thm:maingg}
The problem:

\medskip

\begin{tabular}{|l}
{\sc General quantum invariant problem:}\\
{\bf Input}: $\Cat, V_1, \ldots, V_m$, presented by braiding, twist, evaluation and co-evaluation matrices, \\
\ \ \ \ \ \ \ \ \  and $m$-components link $L$, presented by a diagram $\Diag(L)$,\\
{\bf Output}: quantum invariant $J_L^{\Cat}(V_1, \ldots, V_m)$ \\
\end{tabular}

\bigskip

\noindent
can be solved in $O(\poly(n,d_0,\log C_0) N^{\frac{3}{2} \cw})$ machine operations, where $n$ and $\cw$ are respectively the number of crossings and the carving-width of the diagram $\Diag(L)$, and $d_0$ and $C_0$ are respectively the maximal degree and maximal absolute value of coefficients of any polynomial in the input matrices.
\end{theorem}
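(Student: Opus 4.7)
The plan is to reduce directly to Proposition \ref{thm:arithmpoly} by viewing Theorem \ref{thm:maingg} as essentially the same statement, except that the quantities $d_0$, $\log C_0$, and $N$ are now part of the input rather than fixed constants. In the proof of Theorem \ref{thm:maing} these were absorbed into the $O$-notation; here I would track them explicitly through the complexity expression.

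First I would invoke the preceding proposition on the output polynomial, which gives $d_n = O(n d_0)$ and $C_n = 2^{O(n\sqrt{n}\log N + n\log C_0)}$, so $\log C_n = O(n^{3/2}\log N + n\log C_0)$. Both $d_n$ and $\log C_n$ are therefore $\poly(n, d_0, \log C_0, \log N)$.

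Second, I would substitute these bounds into the complexity formula of Proposition \ref{thm:arithmpoly}. Its dominant term factors as $\bigl(d_n(d_n+\log C_n)\,\Arith(\log(d_n\log d_n + \log C_n))\bigr)\cdot n N^{\frac{3}{2}\cw}$, and the prefactor collapses to $\poly(n,d_0,\log C_0,\log N)$ thanks to the quasi-linearity $\Arith(\ell)=\widetilde{O}(\ell)$. The two remaining terms $d_n(d_n\log d_n + \log C_n)^2$ and $d_n^2\,\Arith(d_n\log d_n + \log C_n)$ carry no $N^{\cw}$ factor and are themselves $\poly(n,d_0,\log C_0,\log N)$, hence are absorbed into the main term whenever $\cw \geq 1$.

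Finally, any remaining polynomial dependence on $\log N$ is absorbed into the $N^{\frac{3}{2}\cw}$ factor, since $\log N = O(N^{\frac{3}{2}\cw})$ for $N\geq 2$ and $\cw\geq 1$. This yields the announced bound $O(\poly(n,d_0,\log C_0)\,N^{\frac{3}{2}\cw})$. The main obstacle is purely bookkeeping: verifying that every sub-expression in the rather intricate bound of Proposition \ref{thm:arithmpoly} does collapse as claimed once the degree and coefficient estimates are substituted, and that no hidden dependence on $N$ sneaks out of the $\Arith$ or $\log$ arguments. No new algorithmic ingredient is required beyond what is already developed in Sections \ref{sec:graphicalalgo}--\ref{sec:arithm}.
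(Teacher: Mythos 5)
Your proposal is correct and follows exactly the route the paper intends: the paper gives no explicit proof of this theorem, treating it as an immediate consequence of substituting the degree bound $d_n = O(nd_0)$ and coefficient bound $\log C_n = O(n^{3/2}\log N + n\log C_0)$ into the complexity expression of Proposition~\ref{thm:arithmpoly}. The only point to watch is your final absorption of $\poly(\log N)$ into $N^{\frac{3}{2}\cw}$, which as literally stated would inflate the exponent; the cleaner justification (and the one the paper implicitly relies on for {\tt XP} membership) is that the input already has size $\Omega(\poly(N, d_0, \log C_0))$, so polylogarithmic factors in $N$ are polynomial in the input size.
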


In other words, when the polynomials in the matrices are encoded with their lists of coefficients, the input size is $\Omega(\poly(N,d_0,\log C_0) + n)$, and the general quantum invariant problem is in the parameterized complexity class {\tt XP}.

\bibliography{biblio}
\bibliographystyle{amsplain}

\end{document}